\documentclass[10pt]{amsart}

\usepackage[colorlinks]{hyperref}
\hypersetup{
citecolor = {blue}
}
\usepackage{color,graphicx,shortvrb}
\usepackage[latin 1]{inputenc}
\usepackage{amssymb}
\parskip=7pt

\usepackage[active]{srcltx} 

\usepackage{enumerate}

\newtheorem{theorem}{Theorem}[section]

\newtheorem{corollary}[theorem]{Corollary}

\newtheorem{definition}[theorem]{Definition}
\newtheorem{lemma}[theorem]{Lemma}

\newtheorem{proposition}[theorem]{Proposition}
\newtheorem{remark}[theorem]{Remark}

\def\W{W_0^{1,p(x)}(\Omega)}

\def\RR{{\mathbb{R}}}
\def\NN{{\mathbb{N}}}
\def\11{\textbf{$1$}}

\def\w{\mathcal{W}(\Omega)}
\def\hlambda{{\hat{\lambda}}}
\def\G{{\widehat{G}}_\lambda}

\DeclareGraphicsExtensions{.jpg,.pdf,.png,.eps}



\usepackage{enumerate}

\begin{document}

\title[A  concave-convex problem with a variable operator]{A  concave-convex problem with a variable operator}

\author[A. Molino and J. D. Rossi]{Alexis Molino and Julio D. Rossi}

\address{
A. Molino: Departamento de An\'alisis Matem\'atico, Campus Fuen\-te\-nue\-va S/N, Uni\-ver\-si\-dad de Granada 18071 - Granada, Spain.
{\tt  amolino@ugr.es}}

\address{
J. D. Rossi: Departamento de Matem\'{a}tica, FCEyN, Universidad de Buenos Aires, Ciudad
Universitaria, Pab~1~(1428), Buenos Aires, Argentina.
{\tt jrossi@dm.uba.ar}}

\date{}

\begin{abstract}
We study the following elliptic problem $-A(u) = \lambda u^q$ with Dirichlet boundary conditions, where $A(u) (x) = \Delta u (x) \chi_{D_1} (x)+ \Delta_p u(x) \chi_{D_2}(x)$ is the Laplacian
in one part of the domain, $D_1$, and the $p-$Laplacian (with $p>2$) in the rest of the domain,
$D_2 $. We show that this problem exhibits a concave-convex nature for $1<q<p-1$. In fact, we prove that there exists a positive value $\lambda^*$  such that
the problem has no positive solution for $\lambda > \lambda^*$ and a minimal positive solution for 
$0<\lambda < \lambda^*$. If in addition we assume that $p$ is subcritical, that is, $p<2N/(N-2)$ then there are
at least two positive solutions for almost every $0<\lambda < \lambda^*$, the first one (that exists for all 
$0<\lambda < \lambda^*$) is obtained minimizing a suitable
functional and the second one (that is proven to exist for almost every $0<\lambda < \lambda^*$) comes from an appropriate (and delicate) mountain pass argument.
\end{abstract}

\maketitle

\begin{center}
{\it To Ireneo Peral a great mathematician and friend in his 70th birthday.}
\end{center}

 \thispagestyle{empty}
 
\section{Introduction}
Given a smooth bounded domain $\Omega$ we split it into two smooth subdomains 
$$
\overline \Omega = \overline {D_1 \cup D_2}, \qquad D_1\cap D_2 = \emptyset
$$
(we assume that both $D_1$ and $D_2$ are Lipschitz).
We call $\Gamma$ the interface inside $\Omega$,
$$
\Gamma = \partial D_1 \cap \Omega = \partial D_2 \cap \Omega,
$$
 and we assume that $\Gamma$ is a smooth surface with finite $(N-1)$ dimensional Hausdorff measure.

  For a fixed $p>2$ we consider the operator  which acts as the Laplacian in the region $D_1$ and as the 
$p$-Laplacian in the region $D_2$. To be more precise, we consider equations of the form
\[
-\Delta u=f(u), \hbox{ in } D_1 \qquad \hbox{ and }\qquad -\Delta_p u=f(u), \hbox{ in } D_2,
\]
with a Dirichlet boundary condition, $u=0$ on $\partial \Omega$, a suitable continuity condition on $\Gamma$ and a power nonlinearity $f$.

  Note that this problem can also be rewritten involving a variable exponent ope\-rator, a $p(x)$-Laplacian, with a discontinuous exponent $p(x)$. That is, we deal with
\begin{equation*}
\left\{\begin{array}{cl}
-\Delta_{p(x)}u=f(u), & \mbox{ in } \Omega,
\\
u=0, & \mbox{ on } \partial \Omega,
\end{array} \right.
\end{equation*}
where $\Delta_{p(x)}u={\rm{div}}\left(|\nabla u|^{p(x)-2}\nabla u \right)$ and the variable discontinuous exponent $p(x)$  is given by 
\begin{equation}\label{function-p(x)}
p(x)=
\left\{ \begin{array}{cl}
2& \hbox{ if } x \in D_1,
\\
p>2 & \hbox{ if } x \in D_2.
\end{array}
\right.
\end{equation}

  With regard to equations involving $p(x)$-Laplacian terms, with a general $p(x)$ (not necessarily discontinuous)  we refer the reader to the recent book \cite{DHHR} for background and an extensive review of recent results. In addition, problems that involve the $p(x)$-Laplacian with a discontinuous variable exponent, which is assumed to be constant in disjoint pieces of the domain $\Omega$, are recently used to model organic semiconductors (i.e., carbon-based materials conducting an electrical current). In these models $p(x)$ describes a jump function that characterizes Ohmic and non-Ohmic contacts of the device
material, see \cite{BGL1} and \cite{BGL2}. 
In fact, let us consider the Organic Light-Emitting Diodes (OLEDs) which are constituted by thin-film heterostructures made up by organic molecules or polymers. Each functional layer has its own current-voltage characteristics and hence, the current-flow equation is of $p(x)$-Laplacian type. Since the exponent $p(x)$ describes non-Ohmic behavior of materials,  it changes abruptly in passing from one to another. For example, in electrodes the parameter $p(x)$ is typically $2$ (Ohmic) while in organic materials $p(x)$ takes larger values, e.g. $p(x)=9$ (\cite{FKGBLLGS}).

This work is devoted to the study of this kind of operators with a power nonlinearity on the right hand side
that has a concave-convex nature with respect to the variable operator $\Delta_{p(x)}$. That is, convex (superlinear) for the Laplacian and concave (sublinear) for the $p$-Laplacian. Concretely, we look for existence and multiplicity of positive weak solutions for the following problem
\begin{equation} \label{alexis}
\left\{\begin{array}{ll}
- \Delta u = \lambda u^q , \qquad & \mbox{ in } D_1, \\[5pt]
- \Delta_p u = \lambda u^q , \qquad & \mbox{ in } D_2 ,\\[5pt]
\displaystyle \frac{\partial u }{\partial \eta} = |\nabla u|^{p-2}
\frac{\partial u }{\partial \eta},\qquad u|_{D_1} = u|_{D_2} , \qquad & \mbox{ on } \Gamma, \\[5pt]
u=0, \qquad & \mbox{ on } \partial \Omega,
\end{array} \right.
\end{equation}
 in the following function space
\[
\w=\left \{v\in W_0^{1,2}(\Omega) : \int_{D_2}|\nabla v|^p<\infty  \right \}.
\]
Here
\[
\lambda>0,\qquad 2<q+1<\,p ,
\]
and $\eta$ is the normal unit vector to $\Gamma$ pointing outwards $D_1$. This space $\w$ is a reflexive and separable Banach space equipped with the norm 
\begin{equation}\label{norm}
[v]_{\w}:=\|\, \nabla v\,\|_{L^2(D_1)}+\|\, \nabla v\,\|_{L^p(D_2)}\,
\end{equation}
(see Lemma \ref{space} for a detailed proof). We refer to the Preliminaries section in order to justify the definition of this convenient space.

  Observe that in \eqref{alexis} we have continuity of the solution, in the sense that the trace of $u$ on $\Gamma$
coincides coming from $D_1$ and coming from $D_2$, and also we have continuity of the associated fluxes across $\Gamma$.
 In addition, note that the exponent $q$ is a superlinear exponent (convex) for the problem in $D_1$ and a $p-$sublinear one (concave) for the problem in $D_2$. Therefore this problem has both a concave part and a convex one
 (but acting in different regions).

  It is fairly easy to see that problem \eqref{alexis} has a variational structure. Indeed, if we consider the functional $F:\w\to \RR$
\begin{equation}\label{functional}
F_\lambda(u) = \int_{D_1} \frac{|\nabla u|^2}{2} \, dx +
\int_{D_2} \frac{|\nabla u|^p}{p} \, dx - \lambda \int_{\Omega} \frac{|u|^{q+1}}{q+1} \, dx,
\end{equation}
as we will see in Lemma \ref{critical-points}, positive solutions of \eqref{alexis} are uniquely identified as being positive critical points for this functional.

From a pure mathematical perspective concave--convex problems have received some interest in the
literature in recent times, including several kinds of boundary conditions and generalizations to other 
operators such as the $p$--Laplacian or fully nonlinear uniformly elliptic operators. The subject goes back to
the pioneering works \cite{BEP}, \cite{GP}, \cite{GP1} and \cite{Lio}. However, \cite{ABC} is
regarded as a first detailed analysis of the main properties of such type of problems, especially
its bifurcation diagrams (see also \cite{Lio}, Section 1.1). We also quote \cite{AGP} and
\cite{GAPM} that deal with Dirichlet conditions and the $p$--Laplacian operator; \cite{CCP}, dedicated 
to fully nonlinear uniformly elliptic operators with Dirichlet boundary conditions; \cite{GPR}, dealing with flux--type nonlinear boundary
conditions and source nonlinearities and \cite{GRS} handling concave--convex terms of absorption nature. Of course,  this list is far from being complete and is only a sample of the previous research on the topic.

  In this framework we have the following results:

\begin{theorem}\label{Teo1} There exists $\lambda^*>0$ such that:
\begin{enumerate}
\item For $0<\lambda <\lambda^*$ there exists $w_\lambda$ a minimal positive solution. Moreover, 
this minimal solution, $w_\lambda$, is unique and increasing with respect to $\lambda$.
\\
\item For $\lambda > \lambda^*$ there is no positive solution.
\end{enumerate}
\end{theorem}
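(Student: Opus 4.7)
My plan follows the Ambrosetti--Brezis--Cerami sub-supersolution strategy for concave--convex problems, adapted to the non-homogeneous operator $A$. Set
\[
\Lambda := \{\lambda > 0 : \eqref{alexis}\ \text{admits a positive weak solution in } \w\}, \qquad \lambda^* := \sup \Lambda,
\]
and establish, in order, (a) $\lambda^*>0$; (b) $\Lambda\supseteq(0,\lambda^*)$, with a unique minimal positive solution $w_\lambda$ nondecreasing in $\lambda$; (c) $\lambda^*<\infty$. Steps (a) and (b) share a common construction by scaling and monotone iteration; step (c) is the main obstacle.

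For the supersolution, let $e\in\w$ be the positive minimizer of the strictly convex, coercive functional $J(v):=\tfrac12\int_{D_1}|\nabla v|^2+\tfrac1p\int_{D_2}|\nabla v|^p-\int_\Omega v$; then $-Ae=1$ weakly on $\Omega$, $e>0$ in $\Omega$ by the strong maximum principle applied separately in each subdomain, and positive homogeneity of each operator yields $-A(te)=t\chi_{D_1}+t^{p-1}\chi_{D_2}$. Consequently $\bar u:=te$ is a supersolution of \eqref{alexis} as soon as $\min\{t,t^{p-1}\}\geq\lambda(t\|e\|_\infty)^q$, which, thanks to $1<q<p-1$, is compatible with any sufficiently small $\lambda$. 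A positive subsolution $\underline u:=\varepsilon\psi$ is constructed by extending the first Dirichlet eigenfunction $\psi$ of $-\Delta_p$ on $D_2$ (with eigenvalue $\mu_p$) by zero to $D_1$: the subsolution inequality reduces pointwise to $\varepsilon^{p-1-q}\mu_p\|\psi\|_\infty^{p-1-q}\leq\lambda$, solvable for small $\varepsilon$ since $p-1-q>0$, and a further smallness requirement ensures $\underline u\leq\bar u$. Monotone iteration $-Aw_{n+1}=\lambda w_n^q$ started at $\underline u$ -- each step well-posed by Lemma~\ref{critical-points} -- produces, via weak comparison for $A$, an increasing sequence $\underline u\leq w_n\leq w_{n+1}\leq\bar u$; compactness in $\w$ and a Minty-type argument for the nonlinear part allow the passage to the limit, yielding a positive weak solution $w_\lambda$. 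Minimality and uniqueness of $w_\lambda$, the interval property of $\Lambda$, and monotonicity of $\lambda\mapsto w_\lambda$ then all follow from the standard observation that any positive solution $v$ of the $\lambda$-problem is itself a supersolution for every smaller parameter, so the iteration inside $[\underline u,v]$ stays below $v$.

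The main obstacle is (c). With $\phi_1>0$ the first Dirichlet eigenfunction of $-\Delta$ on $D_1$ (eigenvalue $\mu_1$), testing $-\Delta u=\lambda u^q$ in $D_1$ against $\phi_1$, integrating by parts twice, and using $\phi_1\equiv0$ on $\partial D_1$ together with Hopf's lemma ($\partial_\eta\phi_1<0$ on $\Gamma$) yields $\lambda\int_{D_1}u^q\phi_1\leq\mu_1\int_{D_1}u\phi_1$. Jensen's inequality for the convex function $t\mapsto t^q$ ($q>1$) on the probability measure $\phi_1\,dx/\int\phi_1$ then upgrades this to
\[
\int_{D_1}u^q\phi_1\,dx \;\leq\; \Bigl(\tfrac{\mu_1}{\lambda}\Bigr)^{q/(q-1)}\int_{D_1}\phi_1\,dx.
\]
A dual bound in $D_2$ is provided by the Picone identity for $-\Delta_p$ applied to $u$ and $\psi$: using $-\Delta_p u=\lambda u^q$ together with $q+1-p<0$, one obtains
\[
\lambda\int_{D_2}\psi^p\,u^{q+1-p}\,dx \;\leq\; \mu_p\int_{D_2}\psi^p\,dx,
\]
and Jensen for $t\mapsto 1/t^{p-1-q}$ turns this into a lower bound $\int_{D_2}u\psi^p\geq c\,\lambda^{1/(p-1-q)}\int_{D_2}\psi^p$. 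Matching these two estimates via continuity of $u$ across $\Gamma$ and the Hopf asymptotics $\phi_1,\psi\sim d(\cdot,\Gamma)$ near $\Gamma$ produces incompatible bounds on $u$ near $\Gamma$ as $\lambda\to\infty$, forcing $\lambda^*<\infty$. Quantifying this matching despite the mismatched homogeneities of the two operators is the technical heart of the proof.
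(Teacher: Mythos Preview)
Your treatment of (a) and (b) is essentially the paper's: the solution $e$ of $-Ae=1$ serves as supersolution for small $\lambda$, an eigenfunction of $-\Delta_p$ on $D_2$ extended by zero gives a subsolution (the paper uses the solution of $-\Delta_p v=\lambda v^q$ on $D_2$ instead, but the mechanism---Hopf on $\Gamma$---is identical), and monotone iteration plus comparison yields the minimal branch and its monotonicity.

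The gap is in (c). Your key inequality in $D_1$ has the wrong sign. With $\phi_1$ the first Dirichlet eigenfunction on $D_1$ and $\eta$ the outward normal to $D_1$, two integrations by parts give
\[
\lambda\int_{D_1} u^q\phi_1 \;=\; \mu_1\int_{D_1} u\phi_1 \;-\;\int_{\Gamma} u\,\partial_\eta\phi_1,
\]
and since $u>0$ on $\Gamma$ while $\partial_\eta\phi_1<0$ by Hopf, the boundary term is \emph{positive}. Thus $\lambda\int_{D_1}u^q\phi_1 \ge \mu_1\int_{D_1}u\phi_1$, not $\le$, and your Jensen step produces no upper bound on $u$ in $D_1$. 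With the correct sign you get a lower bound in $D_1$ and (via Picone) a lower bound in $D_2$, which are perfectly compatible and yield no contradiction for large $\lambda$. Your final paragraph concedes that ``quantifying this matching \ldots\ is the technical heart of the proof'' but does not do it; with the sign corrected there is in fact nothing to match.

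The paper handles (c) by a completely different mechanism. It first shows by comparison that any solution dominates $v=\lambda^{1/(p-1-q)}v_1$ in $D_2$ (so $u$ is uniformly large on a ball $B\subset\subset D_2$), then transfers this largeness to a ball $B_2\subset\subset D_1$ via an auxiliary harmonic/\,$p$-harmonic problem on $\Omega\setminus B$. The contradiction is then obtained \emph{parabolically}: $u$ is a stationary supersolution of $w_t-\Delta w=\lambda w^q$ in $B_2$ with large initial data, but such solutions blow up in finite time. This blow-up device replaces the eigenfunction test, which---as your computation reveals once the sign is fixed---does not close by itself because the boundary flux on $\Gamma$ has no a priori control.
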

The proof is based on the method of sub and supersolution. For this, a comparison principle and
a maximum principle for this problem are needed. For the nonexistence of solutions for $\lambda$ large we use the fact that solutions to the parabolic problem $u_t=\Delta u +\lambda u^q$ in $D_1$, with large initial data, blow up in finite time. Theorem \ref{Teo1} is proved in Section 3.

Our next result shows that this problem has a second solution for almost every 
$0<\lambda < \lambda^*$ when $p$ is subcritical, in our case that is, $p<2^*$. Here
$2^*=\frac{2N}{N-2}$ if $N\geq 3$ and $2^*=\infty$ when $N=1,2$. Note that we also have that $q$ is subcritical
since $1<q<p-1 <2^*-1$.

\begin{theorem}\label{Teo2} Assume, in addition, $p<2^*$ and $D_2\subset \subset \Omega$. Then, there exists a second positive solution $ v_\lambda$ for almost every $0<\lambda < \lambda^*$. 
\end{theorem}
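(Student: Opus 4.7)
I would follow the Ambrosetti--Brezis--Cerami paradigm for concave--convex problems: use the minimal positive solution $w_\lambda$ from Theorem \ref{Teo1} as a strict local minimum of $F_\lambda$, exhibit a mountain-pass geometry around it, and obtain $v_\lambda$ as a critical point at a strictly higher energy. The key obstacle is that, since $q+1<p$, the Ambrosetti--Rabinowitz condition fails for the $p$-Laplacian piece of $F_\lambda$, so Palais--Smale sequences at the mountain-pass level are \emph{not} automatically bounded. This is precisely the point where the ``almost every $\lambda$'' qualification enters: one invokes Jeanjean's monotonicity trick applied to the parametric family $\{F_\lambda\}_{\lambda\in(0,\lambda^*)}$ to extract bounded $(PS)$-sequences for a.e.\ parameter.

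\textbf{Local minimum of $F_\lambda$ at $w_\lambda$.} Pick $\mu\in(\lambda,\lambda^*)$; by Theorem \ref{Teo1}, $w_\mu>w_\lambda$ provides a supersolution of \eqref{alexis}. I would consider the truncated functional $\widetilde{F}_\lambda$ obtained by replacing $|u|^{q+1}/(q+1)$ in \eqref{functional} by the primitive of $g(x,s)=(\min\{\max(s,0),w_\mu(x)\})^q$. Boundedness of this truncated nonlinearity makes $\widetilde{F}_\lambda$ coercive and weakly lower semi-continuous on $\w$; its minimizer solves the truncated problem with $0\le u\le w_\mu$, and by the minimality statement of Theorem \ref{Teo1} this minimizer must coincide with $w_\lambda$. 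A Brezis--Nirenberg-type argument then upgrades the conclusion ``$L^\infty$-local minimizer of $\widetilde F_\lambda$'' to ``$\w$-local minimizer of $F_\lambda$''.

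\textbf{Mountain-pass geometry and compactness.} Since $D_2\subset\subset\Omega$, the open set $D_1$ contains a neighborhood of $\partial\Omega$, so one can pick a non-negative $\varphi\in C_c^\infty(D_1)$ with $\varphi\not\equiv 0$. Along the ray $w_\lambda+t\varphi$ the $p$-integral in \eqref{functional} is unchanged (because $\varphi\equiv 0$ on $D_2$) and the $D_1$-part behaves like $ct^2-c'\lambda t^{q+1}$ with $q+1>2$; hence $F_\lambda(w_\lambda+t\varphi)\to-\infty$. Combined with the previous step, this produces the mountain-pass level
\[
c(\lambda)=\inf_{\gamma}\max_{s\in[0,1]}F_\lambda(\gamma(s))>F_\lambda(w_\lambda),
\]
the infimum being taken over continuous paths in $\w$ joining $w_\lambda$ to $w_\lambda+t_0\varphi$ for $t_0$ large enough. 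Jeanjean's monotonicity principle applied to $\lambda\mapsto c(\lambda)$ then yields a bounded $(PS)_{c(\lambda)}$-sequence for a.e.\ $\lambda\in(0,\lambda^*)$. The subcriticality $q+1<p<2^*$ guarantees the compact embedding $\w\hookrightarrow L^{q+1}(\Omega)$; together with the $(S_+)$-property of $-\Delta$ on $D_1$ and of $-\Delta_p$ on $D_2$, I would upgrade the weak limit to a strong limit in $\w$. The limit $v_\lambda$ is a critical point at energy $c(\lambda)>F_\lambda(w_\lambda)$, hence $v_\lambda\ne w_\lambda$; replacing $u^q$ by $(u^+)^q$ upstream and invoking the maximum principle used for Theorem \ref{Teo1} finally yields $v_\lambda>0$.

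\textbf{Main difficulty.} The hardest part is the bounded-$(PS)$ convergence argument across the interface $\Gamma$: one has to match the $L^2$-flavoured $(S_+)$-property in $D_1$ with the $L^p$-flavoured one in $D_2$ while preserving the flux-matching condition prescribed by \eqref{alexis}, and the verification of Jeanjean's monotonicity trick in this mixed-operator setting, as well as the $L^\infty$-to-$\w$ upgrade of the local-minimum property, both rest on this kind of careful interface analysis. This is the main novelty compared to the classical single-operator concave--convex scenario.
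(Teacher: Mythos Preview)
Your overall architecture---truncate to obtain a local minimum, upgrade the topology, set up a mountain-pass geometry with a test function supported in $D_1$, invoke Jeanjean's monotonicity trick for bounded Palais--Smale sequences, and pass to the limit---is exactly the paper's strategy. A few points, however, are misplaced or underargued.

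\textbf{Where the hypotheses actually enter.} You invoke $D_2\subset\subset\Omega$ only to pick $\varphi\in C_c^\infty(D_1)$ for the mountain-pass direction, but $D_1$ is nonempty regardless, so such a $\varphi$ always exists. The paper uses $D_2\subset\subset\Omega$ in the local-minimum upgrade: since $p(x)$ is discontinuous, minimizers are only $C^\alpha_{\mathrm{loc}}$ in general, and one needs a strip $F_\delta\subset D_1$ around $\partial\Omega$ where classical elliptic regularity gives $C^1$, so that the intermediate topology is $C(\overline\Omega)\cap C^1(F_\delta)$ rather than plain $L^\infty$. Similarly, you cite $p<2^*$ only for the compact embedding $\w\hookrightarrow L^{q+1}$, but $q+1<2^*$ already suffices for that; the paper needs $p<2^*$ in a Stampacchia bootstrap (their Lemma~4.3 and Proposition~4.5) that yields uniform $L^\infty$-bounds for the constrained minimizers $v_\varepsilon$ near $\tilde u$, which is the heart of the $\w$-versus-$C(\overline\Omega)\cap C^1(F_\delta)$ argument.

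\textbf{Two smaller issues.} First, the minimizer of your truncated functional need not coincide with the \emph{minimal} solution $w_\lambda$; Theorem~\ref{Teo1} only says $w_\lambda$ is the smallest solution, not that it minimizes energy among solutions below $w_\mu$. The paper sidesteps this by truncating both above and below (between $u_1$ and $u_2$) and simply calling the resulting minimizer $\tilde u_\lambda$, without identifying it with $w_\lambda$. Second, you assert $c(\lambda)>F_\lambda(w_\lambda)$ outright, but without a strict local-minimum statement this can fail; the paper handles the borderline case $c(\lambda)=\widehat G_\lambda(\tilde u_\lambda)$ separately via the Ghoussoub--Preiss refinement, which produces bounded Palais--Smale sequences on spheres and thus infinitely many critical points for every such $\lambda$.
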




To prove the existence of a second solution we argue in two steps:
 First, using variational methods, we prove that \eqref{alexis} has a solution which is a local minimum of the corresponding energy functional (Theorem \ref{minimum}). This fact is subtle and we run into new difficulties. To be more precise, as the operator acts differently in $D_1$ and in $D_2$, we can only get regularity of solutions at locally Hölder 
 spaces (we refer the seminal paper \cite{Fusco}). Then, to show that there is a local minimum in 
 $\w$, we assume that $D_2\subset \subset \Omega$ in order to get $\mathcal{C}^1$ regularity close to $\partial \Omega$ 
 and then we show that there is a minimum in the stronger topology $\mathcal{C}^1(F_\delta)\cap \mathcal{C}(\overline \Omega)$ where $F_\delta$ is a small strip around the boundary of $\Omega$. Then, by using a delicate regularity argument, we relax the topology to $\w$. Here we use partially the ideas from \cite{ABC,BN,GAPM} adapting them to our setting with the introduction of a new original trick while using Stampacchia's approach in Proposition \ref{trick-Stampacchia} in order to obtain an $L^\infty-$bound. It is at this point where we use that $p<2^*$. Note that our space of solutions $\w$ is a subspace of $W_0^{1,2}(\Omega)$ that is larger than $W_0^{1,p}(\Omega)$.
 
Next, in order to prove the existence of a second positive solution, the crucial fact is to try to apply a Mountain Pass argument. The main difficulty here is to show that Palais-Smale sequences are bounded in $\w$. This question is at present far from being solved and an affirmative answer would allow to find a second solution for all $\lambda \in (0,\lambda^*)$ instead of for almost every $\lambda \in (0,\lambda^*)$. Let us discuss some difficulties: Initially, we point out that the usual trick combining $F_\lambda(u_n) \to c$ with $F_\lambda'(u_n) u_n = o(\|u_n\|)$ does not work here. In addition, we would like to comment that in previous references involving the search for critical points of Mountain Pass type for problems like
  \[
 \left\{
 \begin{array}{cc}
 -\Delta u = f(x,u), & \hbox{ in } \Omega,
 \\
 u=0,& \hbox{ on } \partial \Omega,
 \end{array}
 \right.
 \]
 it is usually assumed that 
 \[
 \exists \, \kappa >2 \, \hbox{ such that }\, \forall \, s\geq 0 \hbox{ and a.e. } x\in \Omega  \Rightarrow 0\leq \kappa F(x,s) \leq sf(x,s),
 \tag{AR}
 \]
 where $F(x,s)=\int_0^sf(x,t)dt$. This condition was originally introduced in \cite{AR} and it is called Ambrosetti-Rabinowitz type condition. Roughly speaking, the role of (AR) is to ensure that all Palais-Smale sequences  at the mountain pass level are bounded. Adapting this result to our variable operator $\Delta u \chi_{D_1}+ \Delta_p u \chi_{D_2}$ it is not difficult to prove that if $f(x,s)$ satisfies property (AR) for $\kappa >p$, then we have that Palais-Smale sequences are bounded (see Appendix). However, in our setting $f(x,s)=\lambda s^{q}$ and (AR)  is not satisfied for $\kappa >p$ because $q+1<p$. Moreover, even conditions weaker than (AR) present in the literature of elliptic equations ensuring  the existence of bounded Palais-Smale sequences are not applicable to our problem.
    To tackle this obstacle, we use some results from the classic works \cite{AR,Figue1,G-P,Jeanjean} again adapting them to our framework.  Mainly,  relying on a result by Jeanjean \cite{Jeanjean} which shows the existence a bounded
Palais-Smale sequence at mountain pass level for almost every $0<\lambda < \lambda^*$. We remark that once we have a bounded Palais-Smale sequence we are able to prove that there is a subsequence that converges strongly in $\w$.

Finally, we note that with the same ideas used here we can obtain similar results for the following problem
\begin{equation*}
\left\{\begin{array}{ll}
- \Delta u = \lambda u^{q_1} \chi_{D_1} + \lambda u^{q_2} \chi_{D_2} , \qquad & \mbox{ in } \Omega \\[5pt]
u=0, \qquad & \mbox{ on } \partial \Omega,
\end{array} \right.
\end{equation*}
with $q_1<1<q_2$. See \cite{GMRS} for similar results for the same problem with $\lambda u^{q(x)}$, with a
continuous exponent $q(x)$.

Also remark that when we take $D_1=D_2 = \Omega$, that is, for the problem
\begin{equation*}
\left\{\begin{array}{ll}
- \Delta u - \Delta_p u = \lambda u^{q}, \qquad & \mbox{ in } \Omega \\[5pt]
u=0, \qquad & \mbox{ on } \partial \Omega,
\end{array} \right.
\end{equation*}
with $1<q<p-1$ one has existence of a minimal positive solution for large $\lambda$, $\lambda > \tilde{\lambda}$ 
and nonexistence for small $\lambda$, $\lambda < \tilde{\lambda}$. This result (that can be obtained
just constructing adequate
sub and supersolution) has to be contrasted with ours for \eqref{alexis} where we have existence for small $\lambda$ and nonexistence for large $\lambda$.

\medskip

The rest of this paper is organized as follows: in the Preliminaries, Section \ref{sect-prelim}, we give some definitions and motivate the 
use of the space $\w$. In Section 3 we deal with the proof of Theorem \ref{Teo1}. Finally, in Section 4 we prove the existence of a second solution provided $p<2^*$. For completeness, in the Appendix we include a proof that
shows that Palais-Smale sequences are bounded when we assume (AR) with $\kappa>p$.

\section{Preliminaries} \label{sect-prelim}
In this section we motivate the 
use of the space $\w$ to define weak solutions for our problem and also we collect some results that will be used throughout this work.

   In order to justify the definition of space $\mathcal{W}(\Omega)$, let us give a briefly description about $W_0^{1,p(x)}$ spaces with $p(x)$ defined in \eqref{function-p(x)}. Following \cite{DHHR} we define the Banach space 
  \[
  L^{p(x)}(\Omega)=\left\{v: \Omega \to \RR \, \hbox{ mesurable }\,:\, \|v\|_{L^2(D_1)}+\|v\|_{L^p(D_2)} <\infty \right \}.
  \]
 equipped with the Luxemburg norm
 \[
 \|v\|_{L^{p(x)}(\Omega)}=\inf_{\tau >0}\left \{ \int_{D_1}\left(\frac{u}{\tau} \right)^2 +\int_{D_2}\left(\frac{u}{\tau} \right)^p \leq 1\right\}.
 \] 
 The space $ L^{p(x)}(\Omega)$ is a reflexive and separable Banach space.  Accordingly, we set the Sobolev space 
 \[
 W^{1,p(x)}(\Omega)=\left\{v: \Omega \to \RR \, \hbox{ mesurable }\,:v,\,|\nabla v| \in L^{p(x)}(\Omega)\,\right \}
 \]
and we have that $W^{1,p(x)}(\Omega)$ is a reflexive and separable Banach space with the norm
 \[
 \|v\|_{W^{1,p(x)}(\Omega)}=\|v\|_{L^{p(x)}(\Omega)}+\|\,\nabla v\,\|_{L^{p(x)}(\Omega)}.
 \]
Moreover, since $C^\infty(\overline \Omega)$ is dense in $W^{1,p(x)}(\Omega)$ (\cite[Theorem 2.4 and 2.7]{FWZ}). Then, $W_0^{1,p(x)}(\Omega)$ is well-defined as the closure of $C_c^\infty(\Omega)$ in $W^{1,p(x)}(\Omega)$ and it satisfies
\[
W_0^{1,p}(\Omega) \subset W_0^{1,p(x)}(\Omega) \subset W_0^{1,2}(\Omega).
\]

  However, we can not use Poincaré's inequality in $\W$ since, in general, it does not hold for discontinuous exponents, see \cite[Sec. 8.2]{DHHR}. Thus, we deal with a different Sobolev space that will be appropriate for our problem. Concretely, we define the Sobolev space $\mathcal{W}(\Omega)$
 \[
 \mathcal{W}(\Omega)=\left \{v\in W_0^{1,2}(\Omega) : \int_{D_2}|\nabla v|^p<\infty  \right \},
 \] 
 equipped with the following norm
 \[
 \|v\|_{ \mathcal{W}(\Omega)}=\|v\|_{W_0^{1,2}(\Omega)}+\|\, \nabla v \,\|_{L^p(D_2)}.
 \]
The space $\mathcal{W}(\Omega)$ is a separable and reflexive Banach space, since it is a closed subspace of $W_0^{1,2}(\Omega)$. The following result asserts that, by using Poincaré inequality, we can use the norm $[\, \cdot \,]_{\w}$ defined in \eqref{norm} which only depends on the gradient terms.
\begin{lemma}\label{space}
 $\left(\mathcal{W}(\Omega),[\, \cdot \,]_{\w}\right)$ is a reflexive and separable  Banach space.
\end{lemma}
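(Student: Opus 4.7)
The plan is to establish that $[\,\cdot\,]_{\w}$ is indeed a norm on $\mathcal{W}(\Omega)$ and that it is equivalent to the norm $\|\,\cdot\,\|_{\mathcal{W}(\Omega)} = \|\,\cdot\,\|_{W_0^{1,2}(\Omega)} + \|\nabla \cdot\,\|_{L^p(D_2)}$ already defined; once this is done, reflexivity and separability transfer from one norm to the other, since these are topological properties of a Banach space, and under the second norm the space is easily seen to be a reflexive separable Banach space (for instance, by identifying $v \mapsto (v,\nabla v\,\chi_{D_2})$ with a closed subspace of the reflexive separable product $W_0^{1,2}(\Omega)\times L^p(D_2)^N$).

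First, I would check that $[\,\cdot\,]_{\w}$ satisfies the axioms of a norm. Homogeneity and the triangle inequality are immediate from the corresponding properties of the $L^2$ and $L^p$ norms. For non-degeneracy, if $[v]_{\w}=0$ then $\nabla v \equiv 0$ a.e. in both $D_1$ and $D_2$, hence $\nabla v \equiv 0$ a.e. in $\Omega$; combined with the Dirichlet condition $v\in W_0^{1,2}(\Omega)$ this yields $v\equiv 0$.

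Next, I would establish the two inequalities giving the equivalence of norms. The inequality $[v]_{\w}\leq C\|v\|_{\mathcal{W}(\Omega)}$ is straightforward: $\|\nabla v\|_{L^2(D_1)}\leq \|\nabla v\|_{L^2(\Omega)}\leq \|v\|_{\mathcal{W}(\Omega)}$ and $\|\nabla v\|_{L^p(D_2)}\leq \|v\|_{\mathcal{W}(\Omega)}$ by definition. For the reverse inequality, I would use that $D_2$ is bounded and $p>2$, so H\"older's inequality gives
\[
\|\nabla v\|_{L^2(D_2)} \leq |D_2|^{\frac{p-2}{2p}}\|\nabla v\|_{L^p(D_2)} \leq C\,[v]_{\w}.
\]
Combining with $\|\nabla v\|_{L^2(D_1)}\leq [v]_{\w}$ yields $\|\nabla v\|_{L^2(\Omega)}\leq C\,[v]_{\w}$, and then the classical Poincar\'e inequality in $W_0^{1,2}(\Omega)$ gives $\|v\|_{L^2(\Omega)}\leq C\,[v]_{\w}$. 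Altogether $\|v\|_{\mathcal{W}(\Omega)}\leq C\,[v]_{\w}$.

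Finally, from the equivalence of the two norms it follows that $(\mathcal{W}(\Omega),[\,\cdot\,]_{\w})$ is complete, reflexive and separable, because $(\mathcal{W}(\Omega),\|\,\cdot\,\|_{\mathcal{W}(\Omega)})$ enjoys these properties. The main subtlety, although not really an obstacle, is the use of Poincar\'e's inequality: it is available here only because one of the two pieces of the norm, namely $\|\nabla v\|_{L^2(D_1)}$, together with the $L^2$-control on $D_2$ coming from H\"older in a \emph{bounded} region, delivers a full $L^2$-gradient control on all of $\Omega$. This is precisely the point highlighted in the preceding discussion where the authors note that Poincar\'e's inequality may fail in $W_0^{1,p(x)}(\Omega)$ for discontinuous $p(x)$, and motivates working in $\w$ rather than in $W_0^{1,p(x)}(\Omega)$.
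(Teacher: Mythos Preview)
Your proof is correct and follows essentially the same route as the paper: establish equivalence of $[\,\cdot\,]_{\w}$ and $\|\cdot\|_{\w}$ by combining the H\"older embedding $L^p(D_2)\hookrightarrow L^2(D_2)$ with the classical Poincar\'e inequality in $W_0^{1,2}(\Omega)$, then transfer reflexivity and separability. Your argument is in fact slightly more careful than the paper's, since you explicitly verify the norm axioms for $[\,\cdot\,]_{\w}$ and justify reflexivity/separability of $(\w,\|\cdot\|_{\w})$ via the closed-subspace embedding into $W_0^{1,2}(\Omega)\times L^p(D_2)^N$, whereas the paper simply asserts these facts beforehand.
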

\begin{proof}
 Since $\left(\w, \|\cdot \|_{\w}\right)$ is a reflexive and separable Banach space, it is sufficient to show that the norms $[\, \cdot \,]_{\w}$ and $\|\cdot \|_{\w}$ are equivalent. 
 For this purpose we use the fact that functions in the classical Sobolev space $W_0^{1,2}(\Omega)$ satisfies the Poincaré inequality and also that the continuous embedding of variable Lebesgue spaces to obtain for arbitrary $v\in \w$,
\begin{align}
\hspace{1.85cm} \nonumber  \|v \|_{\w} & =\|v\|_{W^{1,2}(\Omega)}+\|\,\nabla v\,\|_{L^p(D_2)}
\\
\nonumber  &\leq c_1 \|\, \nabla v \,\|_{L^2(D_1)}+c_1\|\, \nabla v \,\|_{L^2(D_2)}+\|\, \nabla v\,\|_{L^p(D_2)}
\\
\nonumber  &\leq c_1 \|\, \nabla v \,\|_{L^2(D_1)}+c_2\|\, \nabla v \,\|_{L^p(D_2)}
\\
\nonumber  &\leq c_3 \left(  \|\, \nabla v \,\|_{L^2(D_1)}+\|\, \nabla v \,\|_{L^p(D_2)}  \right).
\end{align} 
and
\begin{align}
\hspace{-18cm} \nonumber  \|v \|_{\w} & \geq  \|\, \nabla v \,\|_{L^2(\Omega)}+\|\, \nabla v \,\|_{L^p(D_2)}
\\
\nonumber  &\geq  \|\,\nabla v\,\|_{L^2(D_1)}+\|\,\nabla v\,\|_{L^p(D_2)}.
\end{align}
In these estimates, positive constants are denoted by $c_i,\, i\geq 1$.
 \end{proof}

\begin{remark} {\rm
It is worth pointing out that the $\|\cdot \|_{L^p(D_2)}$-norm is controlled by the $[\, \cdot \,]_{\w}$-norm 
(in particular, if $[u]_{\w}<\infty \Rightarrow \|u \|_{L^p(D_2)}<\infty$). Moreover, there exists $C>0$ such that $ \|u \|_{L^p(D_2)}\leq C \left( \|\,\nabla u\, \|_{L^p(D_2)}+ \|u \|_{L^2(D_2)}  \right)$. 
To see this fact, arguing by contradiction, suppose that for every $n \in \NN$ there exists $u_n$ such that
\begin{equation}\label{converges}
\|u_n\|_{L^p(D_2)}>n\left( \|\,\nabla u_n\, \|_{L^p(D_2)}+ \|u_n \|_{L^2(D_2)} \right)
\end{equation}
which is equivalent to write the above expression as
\[
1>n\left( \|\,\nabla v_n\, \|_{L^p(D_2)}+ \|v_n \|_{L^2(D_2)} \right).
\]
being $$v_n=\displaystyle\frac{u_n}{\|u_n\|_{L^p(D_2)}}.$$ Since $ \|\,\nabla v_n\, \|_{L^p(D_2)}<\frac{1}{n}$ and $\|v_n\|_{L^p(D_2)}=1$ it follows that the sequence $\{v_n\}$ is bounded in $W^{1,p}(D_2)$ and hence, up to a subsequence, $v_n$ converges weakly to $w\in W^{1,p}(D_2)$. Consequently, $v_n \to w$ in $L^r(D_2)$ for every $r\in [2,p^*)$. Taking $r=p$, and the fact $\|v_n\|_{L^p(D_2)}=1$ implies $\|w\|_{L^p(D_2)}=1$. However, taking $r=2$ from \eqref{converges} we have $\|u_n \|_{L^2(D_2)}<\frac{1}{n}$ and then we get that $\|w\|_{L^2(D_2)}=0$ 
leading to a contradiction.}
\end{remark}

 \begin{remark} {\rm Let $\w^\prime$ be the dual space of $\w$. We have that for every fixed $w\in \w$ the functional $\hat w:\w \to \RR$ defined as
 \[
 \hat w(v):=\int_{D_1}\nabla w \nabla v +\int_{D_2}|\nabla w|^{p-2}\nabla w \nabla v+\int_{\Omega}wv,\qquad v\in \w
 \]
belongs to $\w^\prime$.}
 \end{remark}

  Since we are considering positive solutions to the following $p(x)$-laplacian equation
 \begin{equation*}
\left\{\begin{array}{cl}
-\Delta_{p(x)}u=\lambda u^q, & \mbox{ in } \Omega,
\\
u=0, & \mbox{ on } \partial \Omega,
\end{array} \right.
\end{equation*}
with $p(x)$ defined in \eqref{function-p(x)}, a natural idea of what is a positive weak solution is a positive function 
that vanishes on $\partial \Omega$ (in an appropriate trace sense) and such that
\[
\int_{\Omega}|\nabla u|^{p(x)-2}\nabla u \nabla \varphi=\int_{D_1}\nabla u \nabla \varphi+\int_{D_2}|\nabla u|^{p-2}\nabla u \nabla \varphi=\lambda \int_{\Omega}u^{q}\,\varphi
\]
for all $\varphi \in \mathcal{C}_c^\infty (\Omega)$.

Hence, let us state the definition of weak positive solutions to our problem as follows:

\begin{definition}\label{weak-solution} Let $u\in \w$ be a positive function, it is said that $u$ is a weak positive solution of \eqref{alexis} if it satisfies 
\begin{equation}\label{derivate-functional.def}
\int_{D_1}\nabla u \nabla \varphi+\int_{D_2}|\nabla u|^{p-2}\nabla u \nabla \varphi=\lambda \int_{\Omega}u^{q}\,\varphi
\end{equation}
for all $\varphi \in \mathcal{C}_c^\infty(\Omega)$.
\end{definition}

Note that \eqref{derivate-functional.def} is formally equivalent to
the following conditions:
\[
\int_{D_1}\nabla u \nabla \varphi = \lambda \int_{D_1}u^{q}\,\varphi +\int_{\Gamma}\frac{\partial u}{\partial \eta}\,\varphi,
\]
\[
\int_{D_2}|\nabla u|^{p-2}\nabla u \nabla \varphi = \lambda \int_{D_2}u^{q}\,\varphi -\int_{\Gamma}|\nabla u|^{p-2}\frac{\partial u}{\partial \eta}\,\varphi,
\]
and
\[
\int_{\Gamma}\frac{\partial u}{\partial \eta}\,\varphi =\int_{\Gamma}|\nabla u|^{p-2}\frac{\partial u}{\partial \eta}\,\varphi.
\]

In the next lemma we prove that we can study critical points of functional \eqref{functional} instead of solutions of equation \eqref{alexis}.

\begin{lemma}\label{critical-points} Solutions of \eqref{alexis} are characterized by positive critical points of functional in \eqref{functional} 
\end{lemma}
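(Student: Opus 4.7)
The plan is to compute the Gateaux derivative of $F_\lambda$ and observe that, after restricting to positive functions (where $|u|^{q-1}u = u^q$), the vanishing of this derivative is exactly the weak formulation in Definition \ref{weak-solution}.

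First I would check that $F_\lambda : \mathcal{W}(\Omega) \to \RR$ is well-defined. The two gradient terms are finite by the very definition of $[\,\cdot\,]_{\w}$. For the term $\int_\Omega |u|^{q+1}\,dx$, one uses the embeddings $\mathcal{W}(\Omega) \hookrightarrow W_0^{1,2}(\Omega) \hookrightarrow L^{q+1}(\Omega)$ in the subcritical range, or splits the integral, bounding it on $D_1$ through $W^{1,2}$ and on $D_2$ through $W^{1,p}$ embeddings; recall $2 < q+1 < p$, so $u \in L^{q+1}(D_2)$ comes for free from $\int_{D_2} |\nabla u|^p < \infty$.

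Next I would show Gateaux differentiability. For $u,\varphi \in \mathcal{W}(\Omega)$ and $t\in\RR$ small, dominated convergence applied termwise to the difference quotient yields
\begin{equation*}
\langle F_\lambda'(u),\varphi\rangle = \int_{D_1} \nabla u\cdot \nabla \varphi\,dx + \int_{D_2} |\nabla u|^{p-2}\nabla u \cdot\nabla \varphi\,dx - \lambda \int_{\Omega} |u|^{q-1}u\,\varphi\,dx .
\end{equation*}
Continuity of $u\mapsto F_\lambda'(u)$ from $\mathcal{W}(\Omega)$ to $\mathcal{W}(\Omega)^\prime$ follows from H\"older's inequality applied separately on $D_1$ (with exponent $2$) and on $D_2$ (with exponent $p$), together with the Sobolev embeddings used above. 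In particular the first two summands are bounded by $\bigl(\|\nabla u\|_{L^2(D_1)} + \|\nabla u\|_{L^p(D_2)}^{p-1}\bigr)\,[\varphi]_{\w}$, while the last one is bounded by $C\|u\|_{L^{q+1}(\Omega)}^{q}\,[\varphi]_{\w}$. Hence $F_\lambda \in C^1(\mathcal{W}(\Omega);\RR)$.

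Finally, a critical point $u\in \mathcal{W}(\Omega)$ satisfies $\langle F_\lambda'(u),\varphi\rangle = 0$ for every $\varphi\in \mathcal{W}(\Omega)$, and in particular for every $\varphi\in \mathcal{C}_c^\infty(\Omega) \subset \mathcal{W}(\Omega)$. If in addition $u>0$, then $|u|^{q-1}u = u^q$ and we recover exactly \eqref{derivate-functional.def}, so $u$ is a positive weak solution of \eqref{alexis}. Conversely, if $u$ is a positive weak solution, then \eqref{derivate-functional.def} holds for every $\varphi\in \mathcal{C}_c^\infty(\Omega)$; since $u>0$, the expression of $F_\lambda'(u)$ coincides with the difference of the two sides of \eqref{derivate-functional.def}, which is a continuous linear functional on $\mathcal{W}(\Omega)$ vanishing on the dense subspace $\mathcal{C}_c^\infty(\Omega)$, hence $F_\lambda'(u)=0$ on all of $\mathcal{W}(\Omega)$. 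The mildly delicate point, and the one I would spend most care on, is this last density step in the $\mathcal{W}$--norm: it can be handled by truncation together with the standard approximation of $W_0^{1,2}(\Omega)$ functions by $\mathcal{C}_c^\infty(\Omega)$, controlling the extra $L^p(D_2)$ gradient term via mollification on compact subsets of $\Omega$ (using $D_2\subset\subset\Omega$ in the second theorem, or via the intrinsic construction of $\mathcal{W}(\Omega)$ for the general case).
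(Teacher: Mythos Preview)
Your proof is correct and, in fact, more direct than the paper's. Both approaches compute $F_\lambda'$ and compare it to the weak formulation, but the emphasis differs. For the implication ``critical point $\Rightarrow$ weak solution'' you simply restrict the test class from $\w$ to $\mathcal{C}_c^\infty(\Omega)$, which is immediate. The paper instead takes a longer route: it first restricts to $\varphi\in\mathcal{C}_c^\infty(D_i)$ to obtain the PDE in each subdomain, then integrates by parts against $\varphi\in\mathcal{C}_c^\infty(\Omega)$ to produce boundary terms on $\Gamma$, and finally uses the full critical-point identity to deduce the flux-matching condition $\partial u/\partial\eta=|\nabla u|^{p-2}\partial u/\partial\eta$ on $\Gamma$. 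That detour is unnecessary for the lemma as stated (Definition~\ref{weak-solution} is purely variational), but it has the virtue of making explicit the ``formal equivalence'' with the interface conditions written in \eqref{alexis}. For the reverse implication ``weak solution $\Rightarrow$ critical point'' you correctly identify the density of $\mathcal{C}_c^\infty(\Omega)$ in $\w$ as the only real content; the paper asserts this direction in one line without comment. Your sketch of the density argument is adequate, and indeed the paper later invokes the same density in the proof of Proposition~\ref{CP-f} with comparable brevity.
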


\begin{proof} From Definition \ref{weak-solution}, weak solutions satisfy
\begin{equation*}
\int_{D_1}\nabla u \nabla \varphi+\int_{D_2}|\nabla u|^{p-2}\nabla u \nabla \varphi=\lambda \int_{\Omega}u^{q}\,\varphi
\end{equation*}
for all $\varphi \in \mathcal{C}_c^\infty(\Omega)$. Therefore, weak solutions are positive critical points of the functional \eqref{functional}. Conversely, if $u\in \w$ is a critical point, we obtain in particular that
\begin{equation*}
\int_{D_1}\nabla u \nabla \phi =\lambda \int_{D_1}|u|^{q-1}u\,\phi, \quad \forall \, \phi \in \mathcal{C}_c^\infty(D_1).
\end{equation*}
Thus, $u$ is a weak solution of the laplacian problem: $-\Delta u =\lambda |u|^{q-1}u$ in $D_1$. Hence, multiplying by test functions $\varphi \in \mathcal{C}_c^\infty(\Omega)$, integrating by parts and taking into account that $\Gamma=\partial D_1\cap \Omega$, we obtain
\begin{equation}\label{laplacian1}
\int_{D_1}\nabla u \nabla \varphi = \lambda \int_{D_1}|u|^{q-1}u\,\varphi +\int_{\Gamma}\frac{\partial u}{\partial \eta}\,\varphi,
\end{equation}
being $\eta$ the normal unit vector to $\Gamma$ pointing outwards $D_1$. Analogously, choosing  test functions belongs to $\mathcal{C}_c^\infty(D_2)$, we get that critical points are weak solutions to the $p$-laplacian problem: $-\Delta_p u =\lambda |u|^{q-1}u$ in $D_2$. The same arguments used above applied to this case give
\begin{equation}\label{plaplacian1}
\int_{D_2}|\nabla u|^{p-2}\nabla u \nabla \varphi = \lambda \int_{D_2}|u|^{q-1}u\,\varphi -\int_{\Gamma}|\nabla u|^{p-2}\frac{\partial u}{\partial \eta}\,\varphi.
\end{equation}
Finally, since equalities \eqref{laplacian1} and \eqref{plaplacian1}  hold together, the fact that $u$ is a critical point imply that $\int_{\Gamma}\frac{\partial u}{\partial \eta}\,\varphi =\int_{\Gamma}|\nabla u|^{p-2}\frac{\partial u}{\partial \eta}\,\varphi$. Therefore, it follows that positive critical points of functional $F_\lambda$ are weak solutions to our problem.
\end{proof}

Finally, let us introduce the concept of sub and supersolution.

  \begin{definition}
  \label{epe} By a subsolution (respectively, supersolution) to the problem \eqref{alexis} we mean a function $u\in \w$ that satisfies the following inequality:
\[
\int_{D_1}\nabla u \nabla \varphi +\int_{D_2}|\nabla u |^{p-2}\nabla u \nabla \varphi  \leq
(\geq) \, \lambda \int_{\Omega}|u|^{q-1}u\,\varphi,
\]
for every $0\leq \varphi\in C_c^\infty(\Omega)$.
\end{definition}
Note that a solution is just a function which is both a subsolution and a supersolution.


\section{Existence and Non-Existence of Solutions}  \label{sect-existence}
This section deals with existence and non existence of solutions. Initially, note that the functional $F$ does not have a global minimum (and therefore the direct method of calculus of variations is not applicable). Indeed, let $v$ be a function in $\w$ with compact support in $D_1$, then, since we have that $q>1$,
\begin{equation}\label{MP-geom}
F_{\lambda}(t v) = t^2 \int_{D_1} \frac{|\nabla v|^2}{2} \, dx - t^{q+1} \lambda \int_{D_1} \frac{|v|^{q+1}}{q+1} \, dx 
\to -\infty
\end{equation}
as $t\to \infty$.

Hence,  we use sub and supersolution techniques in order to get existence of solutions to problem \eqref{alexis}. Our first step is to prove existence, uniqueness and a comparison principle for the problem
  \begin{equation}\label{for-f}
\left\{\begin{array}{ll}
- \Delta u = f , \qquad & \mbox{ in } D_1, \\[5pt]
- \Delta_p u = f, \qquad & \mbox{ in } D_2 ,\\[5pt]
\displaystyle \frac{\partial u }{\partial \eta} = |\nabla u|^{p-2}
\frac{\partial u }{\partial \eta} \qquad u|_{D_1} = u|_{D_2} , \qquad & \mbox{ on } \Gamma, \\[5pt]
u=0, \qquad & \mbox{ on } \partial \Omega.
\end{array} \right.
\end{equation}
Here solutions, sub and supersolutions are understood as in Definitions \ref{weak-solution} and \ref{epe} 
with $\lambda u^q$ replaced by $f$.
  
\begin{proposition}\label{f}
For every $f\in L^{2}(\Omega)$, the problem \eqref{for-f} has a unique weak solution in $u\in \w$.
\end{proposition}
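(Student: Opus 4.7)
The plan is to obtain the unique weak solution as the (strictly) unique minimizer of the strictly convex energy functional
\[
J(u) = \int_{D_1} \frac{|\nabla u|^2}{2}\,dx + \int_{D_2} \frac{|\nabla u|^p}{p}\,dx - \int_\Omega f u\,dx
\]
on the reflexive Banach space $\w$, via the direct method of the calculus of variations. The argument parallels Lemma \ref{critical-points}: integration by parts on test functions supported in $D_1$, then in $D_2$, and finally arbitrary compactly supported test functions gives that $u\in\w$ is a critical point of $J$ if and only if it is a weak solution of \eqref{for-f}, so it suffices to produce a unique minimizer of $J$.

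First I would check that $J$ is well defined on $\w$ and coercive with respect to $[\,\cdot\,]_{\w}$. The two gradient terms are obviously finite for $u\in\w$. For the linear term, since $f\in L^2(\Omega)$, Cauchy--Schwarz and the classical Poincar\'e inequality on $W^{1,2}_0(\Omega)$ give $\left|\int_\Omega f u\right|\le C\|f\|_{L^2(\Omega)}\|\nabla u\|_{L^2(\Omega)}$, and since $D_2$ has finite measure and $p>2$, H\"older's inequality yields $\|\nabla u\|_{L^2(D_2)}\le |D_2|^{1/2-1/p}\|\nabla u\|_{L^p(D_2)}$. Hence there is a constant $C_f$ with
\[
\left|\int_\Omega f u\right| \le C_f\,[u]_{\w},
\]
while the gradient part of $J$ grows at least like $\min\{1/2,1/p\}(\|\nabla u\|_{L^2(D_1)}^2+\|\nabla u\|_{L^p(D_2)}^p)$. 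Because $p>2>1$, this forces $J(u)\to+\infty$ whenever $[u]_{\w}\to\infty$, giving coercivity.

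Next I would verify sequential weak lower semicontinuity of $J$ in $\w$: both maps $u\mapsto \int_{D_1}|\nabla u|^2/2$ and $u\mapsto\int_{D_2}|\nabla u|^p/p$ are continuous and convex (hence weakly l.s.c.) on $\w$, while $u\mapsto\int_\Omega f u$ is linear and continuous on $W^{1,2}_0(\Omega)$, so in particular weakly continuous on $\w$. Taking a minimizing sequence $\{u_n\}$, coercivity gives boundedness in $\w$, reflexivity (Lemma \ref{space}) yields a subsequence $u_n\rightharpoonup u$, and weak l.s.c.\ gives $J(u)\le\liminf J(u_n)=\inf J$, so $u$ is a minimizer and therefore a weak solution of \eqref{for-f}.

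Uniqueness follows from strict convexity: the Hessian-level strict convexity of $\xi\mapsto |\xi|^2/2$ on $\RR^N$ and of $\xi\mapsto |\xi|^p/p$ on $\RR^N$ (for $p>2$), combined with the fact that $[u-v]_{\w}=0$ forces $\nabla u=\nabla v$ a.e.\ in both $D_1$ and $D_2$ (hence $u=v$ by the boundary condition), makes $J$ strictly convex on $\w$, so the minimizer is unique. The only non-routine point in the whole scheme is the coercivity estimate, where it is essential to exploit that $f\in L^2(\Omega)$ is tested against a function whose $L^2$-norm is controlled by $[u]_{\w}$ via Poincar\'e together with the H\"older embedding $L^p(D_2)\hookrightarrow L^2(D_2)$; once this is in place the direct method runs without further difficulty.
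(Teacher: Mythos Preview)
Your proposal is correct and follows essentially the same approach as the paper: both minimize the natural energy functional on $\w$ via the direct method (coercivity, weak lower semicontinuity, reflexivity) and deduce uniqueness from strict convexity. The only cosmetic difference is that you spell out the coercivity estimate via Poincar\'e and the H\"older embedding $L^p(D_2)\hookrightarrow L^2(D_2)$, whereas the paper records it as the one-line inequality $I(u)\ge C(\|\nabla u\|_{L^2(D_1)}^2-\|\nabla u\|_{L^2(D_1)}+\|\nabla u\|_{L^p(D_2)}^p-\|\nabla u\|_{L^p(D_2)})$.
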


\begin{proof}
It is sufficient to prove that the functional 
\[
I(u) := \int_{D_1} \frac{|\nabla u|^2}{2} \, dx +
\int_{D_2} \frac{|\nabla u|^p}{p} \, dx -  \int_{\Omega} f\,u \, dx ,
\]
has a unique critical point in $\w$. First, observe that  is straightforward that this functional  is weakly lower semi continuous in $\w$. Moreover,  there exists $0<C=C(N,p,\|f\|_{L^2(\Omega)},|\Omega|)$ such that
\[
I(u)\geq C\left( \|\,\nabla u\,\|_{L^2(D_1)}^2-\|\,\nabla u\,\|_{L^2(D_1)}+\|\,\nabla u\,\|_{L^p(D_2)}^p -\|\,\nabla u\,\|_{L^p(D_2)} \right).
\]
Thus, the functional is coercive (i.e., $I(u)\to \infty$ as $[u]_{\w}\to \infty$)  and since $\w$ is a reflexive Banach space there exists $u^*\in \w$ such that 
\[
I(u^*)=\min \{I(u)\,: \, u\in \w \}.
\]

  The uniqueness is due to the strict convexity of $I$. Indeed, by using the inequality  $|\xi|^r\geq |\xi_0|^r+r|\xi_0|^{r-2}\xi_0(\xi-\xi_0),$ for $\xi,\xi_0 \in \RR^N$ and $r=2,p$ (which is strict if $\xi \neq \xi_0$) it follows that $I(w)>I(v)+I^\prime (v)(w-v)$ for $v\neq w \in \w$.
\end{proof}

\begin{proposition}\label{CP-f}
Let $u_1,u_2\in \w$ be sub and supersolution respectively of \eqref{for-f}. Then $u_1\leq u_2$ a.e. in $\Omega$.
\end{proposition}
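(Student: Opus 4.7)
The plan is to use the standard test function $\varphi = (u_1 - u_2)^+$ and exploit the strict monotonicity of both $-\Delta$ on $D_1$ and $-\Delta_p$ on $D_2$.

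First I would verify that $\varphi := (u_1 - u_2)^+$ is an admissible test function in $\mathcal{W}(\Omega)$. Since $u_1, u_2 \in \mathcal{W}(\Omega) \subset W_0^{1,2}(\Omega)$ with $\nabla u_i \in L^p(D_2)$, the difference $u_1 - u_2$ lies in $\mathcal{W}(\Omega)$, and truncation by the positive part preserves this space (with $\nabla \varphi = \nabla(u_1-u_2)\chi_{\{u_1 > u_2\}}$). Moreover, $\varphi = 0$ on $\partial\Omega$ in the trace sense, so a standard density argument lets us use $\varphi$ in place of functions in $\mathcal{C}_c^\infty(\Omega)$ in the subsolution/supersolution inequalities of Definition \ref{epe} (with $\lambda |u|^{q-1}u$ replaced by $f$).

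Next, I would subtract the two inequalities. Since $u_1$ is a subsolution and $u_2$ is a supersolution for the same data $f$, testing both with $\varphi \geq 0$ and subtracting gives
\[
\int_{D_1} \nabla(u_1 - u_2) \cdot \nabla \varphi \, dx + \int_{D_2} \bigl(|\nabla u_1|^{p-2}\nabla u_1 - |\nabla u_2|^{p-2}\nabla u_2\bigr) \cdot \nabla \varphi \, dx \leq 0.
\]
On $\{u_1 > u_2\}$ we have $\nabla \varphi = \nabla(u_1 - u_2)$, so the first integral equals $\int_{D_1 \cap \{u_1>u_2\}} |\nabla(u_1-u_2)|^2\,dx \geq 0$, and the second is nonnegative by the well-known monotonicity inequality
\[
(|\xi|^{p-2}\xi - |\zeta|^{p-2}\zeta)\cdot(\xi - \zeta) \geq 0 \quad \text{for all } \xi,\zeta \in \mathbb{R}^N,
\]
with strict inequality whenever $\xi \neq \zeta$.

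Therefore both integrals must vanish. The vanishing of the first forces $\nabla(u_1-u_2)^+ = 0$ a.e. in $D_1$, while the vanishing of the second, together with strict monotonicity, forces $\nabla u_1 = \nabla u_2$ a.e. on $D_2 \cap \{u_1 > u_2\}$, hence $\nabla \varphi = 0$ a.e. in $D_2$ as well. Thus $\nabla \varphi \equiv 0$ in $\Omega$, and the Dirichlet condition $\varphi|_{\partial\Omega}=0$ combined with Poincaré's inequality in $W_0^{1,2}(\Omega)$ yields $\varphi \equiv 0$, i.e.\ $u_1 \leq u_2$ a.e.\ in $\Omega$.

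The only delicate point I expect is the admissibility of the test function $\varphi = (u_1 - u_2)^+$: in Definition \ref{epe} the inequality is stated for $\varphi \in \mathcal{C}_c^\infty(\Omega)$, so one must justify extending it to nonnegative functions in $\mathcal{W}(\Omega)$ vanishing on $\partial\Omega$, which follows from the density of $\mathcal{C}_c^\infty(\Omega)$ in $\mathcal{W}(\Omega)$ and the integrability of $|\nabla u_i|^{p-1}|\nabla \varphi|$ on $D_2$ (which holds since both gradients are in $L^p(D_2)$).
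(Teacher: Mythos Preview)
Your argument is correct and follows essentially the same route as the paper: test with $(u_1-u_2)^+$ (justified by density), subtract the two inequalities, and use the monotonicity of the $p$-Laplacian on $D_2$ together with the obvious positivity on $D_1$ to conclude $(u_1-u_2)^+\equiv 0$. The only cosmetic difference is that the paper invokes the quantitative inequality $(|\xi|^{r-2}\xi-|\xi_0|^{r-2}\xi_0)\cdot(\xi-\xi_0)\geq c(r)|\xi-\xi_0|^r$ for $r=2,p$, which immediately yields $\int_\Omega |\nabla(u_1-u_2)^+|^2=0$, whereas you use the non-strict monotonicity plus strictness to reach $\nabla\varphi=0$ and then Poincar\'e; both variants are standard and equivalent here.
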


\begin{proof}
From the definition of sub and supersolution we get,
for every test function $0\leq \varphi \in \mathcal{C}_c^\infty(\Omega)$,
\begin{align}
\label{eq1}\int_{D_1}\nabla u_1 \nabla \varphi +\int_{D_2}|\nabla u_1 |^{p-2}\nabla u_1 \nabla \varphi &\leq  \int_{\Omega}f\,\varphi,
\\
\int_{D_1}\nabla u_2 \nabla \varphi +\int_{D_2}|\nabla u_2 |^{p-2}\nabla u_2 \nabla \varphi &\geq  \int_{\Omega}f\,\varphi.
\label{eq2}
\end{align}

Note that since $\w \subset W_0^{1,2}(\Omega)=\overline{\mathcal{C}_c^\infty(\Omega)}^{W^{1,2}}$, by density we can choose test functions in $\w$. In this way, consider the test function $$\varphi =(u_1-u_2)^+:=\max \left\{u_1-u_2,0  \right\}$$ in the above inequalities and subtract \eqref{eq2} from \eqref{eq1} to obtain 
\begin{align*}
\int_{\{x\in D_1:u_1>u_2\}}|\nabla (u_1-u_2)|^2 &
\\
+\int_{\{x\in D_2:u_1>u_2\}}&\left(|\nabla u_1|^{p-2}\nabla u_1-|\nabla u_2|^{p-2}\nabla u_2 \right)(\nabla u_1 -\nabla u_2)\leq 0.
\end{align*}
Finally, taking into account the well-known inequality 
\begin{equation}\label{class-ineq}
\left(|\xi|^{r-2}\xi-|\xi_0|^{r-2}\xi_0  \right)(\xi-\xi_0)\geq c(r)|\xi-\xi_0|^r,\,\quad \xi,\xi_0\in \RR^N, 
\end{equation}
for $r=2,p$, we conclude that $(u_1-u_2)^+ \equiv 0$ finishing the proof.
\end{proof}

As a direct consequence, there exists $u\geq 0$ the unique weak solution of \eqref{for-f} for every $0\leq f\in L^2(\Omega)$. The next result shows that in fact the solution is strictly positive when $f$ is nontrivial.

\begin{proposition}\label{positive123}For every nontrivial $0\leq f\in L^2(\Omega)$, every supersolution of \eqref{for-f} is strictly positive in $\Omega$.
\end{proposition}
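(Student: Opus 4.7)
The plan is to reduce strict positivity of an arbitrary supersolution $u$ to strict positivity of the unique weak solution $w \in \w$ of \eqref{for-f} (provided by Proposition~\ref{f}), and then combine the classical strong maximum principle on $D_1$ with V\'azquez's strong maximum principle on $D_2$, handling the interface $\Gamma$ carefully. Since $f \geq 0$, the constant $0$ is a subsolution of \eqref{for-f}, so Proposition~\ref{CP-f} applied to the pair $(0,w)$ gives $w \geq 0$ a.e., while applying it to $(w,u)$ gives $u \geq w$. Hence it suffices to prove $w > 0$ in $\Omega$, and from now on I work with $w$. For simplicity, assume $D_1$ and $D_2$ are connected (the general case follows componentwise).

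Because $-\Delta w = f \geq 0$ and $w \geq 0$ in $D_1$, the classical strong maximum principle gives the dichotomy: either $w > 0$ or $w \equiv 0$ in $D_1$. Likewise, since $-\Delta_p w = f \geq 0$ and $w \geq 0$ in $D_2$, V\'azquez's strong maximum principle for the $p$-Laplacian yields the same alternative in $D_2$. Standard elliptic regularity (using the smoothness of $\Gamma$ and the fact that $f \in L^2$) makes $w$ smooth enough up to $\Gamma$ from each side to apply Hopf-type boundary lemmas.

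The main obstacle is to exclude the trivial branch on either subdomain and ultimately to propagate strict positivity across $\Gamma$. Suppose first that $w \equiv 0$ in $D_2$. Testing \eqref{derivate-functional.def} with $\varphi \in C_c^\infty(D_2)$ forces $f \equiv 0$ in $D_2$; since $f$ is nontrivial, $f \not\equiv 0$ in $D_1$, and continuity of the trace yields $w = 0$ on $\Gamma$. Thus $w|_{D_1} \in W_0^{1,2}(D_1)$ solves $-\Delta w = f$ with full Dirichlet data, so the strong maximum principle combined with Hopf's lemma gives $w > 0$ in $D_1$ and $\partial_\eta w < 0$ on $\Gamma$, where $\eta$ denotes the outer unit normal to $D_1$. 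On the other hand, testing \eqref{derivate-functional.def} with a nonnegative $\varphi \in C_c^\infty(\Omega)$ supported across $\Gamma$ and integrating by parts in $D_1$ (the $D_2$ integral vanishing because $\nabla w \equiv 0$ there) produces
\[
\int_\Gamma \partial_\eta w \cdot \varphi \, d\sigma = 0 \qquad \forall\, 0 \leq \varphi \in C_c^\infty(\Omega),
\]
which is incompatible with $\partial_\eta w < 0$. The symmetric case $w \equiv 0$ in $D_1$ is excluded in the same way by invoking V\'azquez's boundary point lemma on the $D_2$ side.

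Therefore $w > 0$ in $D_1 \cup D_2$. To obtain positivity on $\Gamma$ itself, suppose $w(x_0) = 0$ at some $x_0 \in \Gamma$. The classical Hopf lemma applied from the $D_1$ side gives $\partial_\eta w|_{D_1}(x_0) < 0$, while V\'azquez's boundary point lemma applied from the $D_2$ side (whose outer normal at $x_0$ is $-\eta$) yields $|\nabla w(x_0)|^{p-2} \partial_\eta w|_{D_2}(x_0) > 0$. These opposite signs contradict the flux-continuity interface identity $\partial_\eta w|_{D_1} = |\nabla w|^{p-2} \partial_\eta w|_{D_2}$, which follows from \eqref{derivate-functional.def} by integration by parts on either side of $\Gamma$. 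Hence $w > 0$ throughout $\Omega$, and every supersolution inherits this strict positivity.
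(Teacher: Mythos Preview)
Your argument is correct and reaches the same conclusion, but the route differs from the paper's in a notable way. The paper works \emph{directly} with an arbitrary supersolution $u$: assuming without loss of generality that $f|_{D_2}\not\equiv 0$, it compares $u$ against the solution $v\in W_0^{1,p}(D_2)$ of $-\Delta_p v=f$ with zero Dirichlet data to get $u\geq v>0$ in $D_2$; a Hopf argument then forces $u>0$ on $\Gamma$; finally it compares $u$ against the harmonic function in $D_1$ with boundary data $u|_{\partial D_1}$ to conclude $u>0$ in $D_1$. No flux-continuity equality is really needed, since the comparison is done against auxiliary Dirichlet problems on each subdomain.

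You instead first reduce to the unique solution $w$ via Proposition~\ref{CP-f} (which is a clean observation the paper does not make), and then exploit the fact that for the \emph{solution} the interface flux identity $\partial_\eta w=|\nabla w|^{p-2}\partial_\eta w$ holds as an equality, not merely an inequality. This lets you run symmetric strong-maximum-principle dichotomies on $D_1$ and $D_2$ and exclude both the degenerate branches and interior zeros on $\Gamma$ by sign contradictions in the flux balance. The trade-off: your method leans more on $C^1$-regularity of $w$ up to $\Gamma$ from each side (needed to use Hopf/V\'azquez pointwise and to read the flux identity at a single point), whereas the paper's comparison-with-barriers approach is slightly more robust in that respect. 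Both are valid under the standing smoothness assumption on $\Gamma$.
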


\begin{proof}
Let $u\geq 0$ in $\Omega$ be a supersolution (or a solution) to \eqref{for-f}. There is no 
loss of generality in assuming that $f_{|D_2}\neq 0$ (the argument when $f_{|D_1}\neq 0$
is completely analogous). 
Consider $0<v\in W_0^{1,p}(D_2)$ the solution to the problem
\begin{equation}\label{in-D2}
\left\{\begin{array}{ll}
- \Delta_p v = f , \qquad & \mbox{ in } D_2, \\[5pt]
v=0 , \qquad & \mbox{ on } \partial D_2 .
\end{array} \right.
\end{equation}
Since $u\geq 0$, it follows that $u\geq 0$ on $\Gamma$ and hence $u$ is a supersolution to \eqref{in-D2}. From 
the comparison principle we obtain that $u\geq v>0$ in $D_2$. Furthermore, if $u(x_0)=0$ for some $x_0\in \Gamma$, by Hopf's lemma we have, in addition, that $$\displaystyle \frac{\partial u(x_0)}{\partial \eta}=|\nabla u(x_0)|^{p-2}\displaystyle\frac{\partial  u(x_0)}{\partial \eta}<0$$ which means that $x_0$ is not a minimum of $u$ and this contradicts the fact that $u(x_0)=0$. Therefore, $u>0$ on $\Gamma$. Finally, to show the that $u$ is positive in the region $D_1$, consider $w\in W^{1,2}(D_1)$ the solution to the following problem
\begin{equation}\label{in-D1}
\left\{\begin{array}{ll}
- \Delta w = 0 , \qquad & \mbox{ in } D_1, \\[5pt]
w=u , \qquad & \mbox{ on } \partial D_1 .
\end{array} \right.
\end{equation}
Since $u>0$ on $\Gamma \subset \partial D_1$, the strong maximum principle applied in problem \eqref{in-D1} shows that $w>0$ in $D_1$. Taking into account that $u$ is a supersolution to problem \eqref{in-D1}, we conclude 
from the comparison principle that $u\geq w>0$ in $D_1$.
\end{proof}

\begin{corollary}\label{positive}
Let $u\in \w$ be a nonnegative solution to problem \eqref{alexis}. Then either $u(x)=0$ a.e. $x$ in $\Omega$ or $u(x)>0$ a.e. $x\in \Omega$.
\end{corollary}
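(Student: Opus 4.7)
The plan is to reduce the claim to Proposition \ref{positive123} by reinterpreting $u$ as a solution of the mixed problem \eqref{for-f} with right-hand side $f := \lambda u^q$. Assume $u$ is not a.e.\ zero in $\Omega$; since $u\geq 0$, the function $f$ is nonnegative and nontrivial, so at least one of $f|_{D_1}$, $f|_{D_2}$ does not vanish identically. The dichotomy then follows if I can invoke Proposition \ref{positive123}, which asserts that any supersolution of \eqref{for-f} with a nontrivial nonnegative datum is strictly positive in $\Omega$.

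The first technical step is to verify that Proposition \ref{positive123} is applicable here, i.e.\ that $f = \lambda u^q$ lies in the integrability class used in its proof, so that the auxiliary Dirichlet problem \eqref{in-D2} on $D_2$ (for the $p$-Laplacian with datum $f$) and the harmonic majorant problem \eqref{in-D1} on $D_1$ are well-posed with this particular $f$. Since $u\in \w\subset W_0^{1,2}(\Omega)$ and $u|_{D_2}\in W^{1,p}(D_2)$, Sobolev embedding gives $u\in L^{2^*}(\Omega)$ and $u|_{D_2}\in L^{p^*}(D_2)$; combined with the standing hypothesis $q<p-1<p^*-1$, this places $u^q$ in the dual spaces needed (e.g.\ $u^q\in L^{(p^*)'}(D_2)\subset W^{-1,p'}(D_2)$). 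With that in hand, $u$ is a nontrivial nonnegative supersolution of \eqref{for-f}, and Proposition \ref{positive123} immediately gives $u>0$ in $\Omega$.

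The delicate point, and the one I expect to be the main obstacle, is precisely this integrability verification: Proposition \ref{positive123} is phrased for $f\in L^2(\Omega)$, while a priori we only know $f=\lambda u^q$ lies in some smaller $L^r$. If a gap appears there, my fallback is a direct argument: apply V\'azquez's strong maximum principle for $-\Delta_p$ in $D_2$ and the classical strong maximum principle for $-\Delta$ in $D_1$ to obtain a trichotomy (either $u\equiv 0$ in both regions, or $u>0$ in both, or $u$ vanishes identically in exactly one of them). The first two alternatives give the stated dichotomy, and the mixed alternative is ruled out exactly as in the proof of Proposition \ref{positive123}: at any interface point $x_0\in\Gamma$, Hopf's lemma on the side where $u>0$ forces a strict sign on $\partial u/\partial\eta$, which is incompatible with the vanishing of $u$ and $\nabla u$ on the other side together with the flux-continuity condition $\partial u/\partial\eta = |\nabla u|^{p-2}\partial u/\partial\eta$ built into the weak formulation. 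This contradiction leaves only $u\equiv 0$ or $u>0$ a.e.\ in $\Omega$, as required.
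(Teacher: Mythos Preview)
Your proposal is correct and matches the paper's approach: the corollary is stated immediately after Proposition~\ref{positive123} with no separate proof, precisely because it follows by taking $f=\lambda u^q$ and invoking that proposition. Your attention to the integrability hypothesis $f\in L^2(\Omega)$ is in fact more careful than the paper itself, which glosses over this point; your fallback argument via the strong maximum principle in each subdomain plus Hopf's lemma at~$\Gamma$ is exactly the mechanism inside the proof of Proposition~\ref{positive123}, so either route closes the argument.
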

The method of proof of Proposition \ref{positive123} can be applied to solutions that are nonnegative 
and nontrivial on the boundary. To be more precisely, we state the following proposition whose proof is almost the same as the previous one and is therefore omitted.
\begin{proposition}\label{strong-principle}
Let $0\leq f\in L^2(\Omega)$ (maybe trivial) and $u$ solution of \eqref{for-f} with boundary conditions  $0 \lvertneqq u$ on $\partial \Omega$. Then $u>0$ in $\Omega$.
\end{proposition}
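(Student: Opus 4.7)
The plan is to follow the same strategy as in the proof of Proposition \ref{positive123}, with the only structural change being that the ``source'' of positivity now comes from the nontrivial boundary trace on $\partial \Omega$ rather than from a nontrivial right-hand side $f$. The key ingredients remain the strong maximum principles for $-\Delta$ in $D_1$ and for $-\Delta_p$ in $D_2$, glued along the interface $\Gamma$ via Hopf's lemma and the flux matching condition.

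I would first establish that $u \geq 0$ in $\Omega$. Since $u \geq 0$ on $\partial \Omega$, the negative part $u^- = \max\{-u,0\}$ has zero trace on $\partial \Omega$ and therefore belongs to $\w$, so it may be used as a test function in the weak formulation of \eqref{for-f}. Applying the elementary inequality \eqref{class-ineq} for $r=2$ in $D_1$ and for $r=p$ in $D_2$, together with $f \geq 0$, forces $\nabla u^- \equiv 0$, and combined with the trace condition gives $u^- \equiv 0$.

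Next, I would split into cases according to where $u|_{\partial \Omega}$ is nontrivial. Assume, without loss of generality, that $u$ is nontrivial on $\partial D_1 \cap \partial \Omega$; the symmetric case $u|_{\partial D_2 \cap \partial \Omega} \not\equiv 0$ is identical with the roles of $D_1$ and $D_2$ interchanged. In $D_1$ the function $u$ solves $-\Delta u = f \geq 0$ with nonnegative and nontrivial Dirichlet data on $\partial D_1 \cap \partial \Omega \subset \partial D_1$, so the classical strong maximum principle for the Laplacian yields $u > 0$ in $D_1$. If $u(x_0)=0$ at some $x_0 \in \Gamma$, Hopf's lemma applied from the $D_1$ side forces a strict sign on $\partial u/\partial \eta(x_0)$; combined with the flux matching condition across $\Gamma$ and the fact that $u(x_0)=0$ is also a minimum from the $D_2$ side, this produces the same contradiction used at the end of the proof of Proposition \ref{positive123}. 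Hence $u > 0$ on $\Gamma$, and V\'azquez's strong maximum principle applied to $-\Delta_p u = f \geq 0$ in $D_2$ with nonnegative boundary data that is strictly positive on $\Gamma \subset \partial D_2$ yields $u > 0$ in $D_2$.

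The main obstacle, inherited from Proposition \ref{positive123}, is the propagation of strict positivity across the interface $\Gamma$, where the operator changes type between Laplacian and $p$-Laplacian. It is precisely the flux matching condition that allows Hopf's lemma applied in one subdomain to transfer positivity to the other, and once this crossing has been secured the arguments inside each subdomain reduce to classical strong maximum principles.
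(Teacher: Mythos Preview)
Your proposal is correct and follows precisely the route the paper intends: the paper omits the proof, stating only that it is ``almost the same'' as that of Proposition~\ref{positive123}, and your adaptation---replacing the nontrivial right-hand side $f$ by the nontrivial boundary trace as the source of positivity, then propagating across $\Gamma$ via Hopf's lemma and the flux matching condition---is exactly this adaptation. Your added preliminary step (testing with $u^-$ to secure $u\geq 0$) is a necessary detail the paper leaves implicit.
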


Now, we are ready to prove one of the main goals of this section.

\begin{proposition}\label{existence} There exists a minimal bounded and positive solution of problem \eqref{alexis} for every $0< \lambda \leq \tilde \lambda$, being $\tilde \lambda$ sufficiently small.
\end{proposition}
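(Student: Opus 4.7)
The plan is to apply the classical monotone iteration scheme built on Propositions \ref{f}, \ref{CP-f} and \ref{positive123}: I construct a bounded positive supersolution $\bar u$ of \eqref{alexis}, valid for small $\lambda$, and a positive subsolution $\underline u\leq \bar u$, and then iterate between them.

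For the supersolution I take $\phi\in\w$ to be the unique solution of \eqref{for-f} with $f\equiv 1$, given by Proposition \ref{f}, which is strictly positive in $\Omega$ by Proposition \ref{positive123}. The crucial auxiliary fact is $\phi\in L^\infty(\Omega)$, which I would obtain by a Stampacchia-type truncation argument performed region-wise, using that $\phi$ satisfies $-\Delta\phi=1$ in $D_1$ and $-\Delta_p\phi=1$ in $D_2$ with common trace on $\Gamma$ and zero trace on $\partial\Omega$. Setting $\tilde\lambda:=\|\phi\|_{L^\infty(\Omega)}^{-q}$, the trivial inequality
\[
-A(\phi)=1\geq \lambda\|\phi\|_\infty^q\geq \lambda\phi^q
\]
then shows that $\phi$ is a supersolution for every $0<\lambda\leq\tilde\lambda$.

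For the subsolution I fix a ball $B\subset\subset D_2$ and let $\psi\geq 0$ be the first Dirichlet eigenfunction of $-\Delta_p$ on $B$ with eigenvalue $\mu_1>0$, normalized so that $\|\psi\|_\infty=1$ and extended by zero to $\Omega$; then $\psi\in W^{1,p}_0(\Omega)\subset\w$, $\psi$ vanishes in a neighborhood of $\Gamma$, and the transmission condition is automatic. The subsolution inequality for $\underline u:=\varepsilon\psi$ reduces to the pointwise bound $\varepsilon^{p-1}\mu_1\psi^{p-1}\leq \lambda\varepsilon^q\psi^q$ on $B$, i.e.\ $\varepsilon^{p-1-q}\mu_1\leq\lambda$, which is solvable for small $\varepsilon$ since $p-1-q>0$; shrinking $\varepsilon$ further one also ensures $\varepsilon\psi\leq \phi$, because $\phi$ has a positive minimum on $\overline B$.

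Starting from $u_0:=\underline u$ I define $u_{n+1}$ as the unique solution in $\w$ of \eqref{for-f} with $f:=\lambda u_n^q$, which exists by Proposition \ref{f} since the iterates remain in $L^\infty\subset L^2$. Induction via Proposition \ref{CP-f}, together with the monotonicity of $s\mapsto s^q$, yields $\underline u=u_0\leq u_1\leq\cdots\leq u_n\leq u_{n+1}\leq\phi$, and the pointwise limit $w_\lambda:=\lim u_n$ lies in $[\underline u,\phi]$. The main obstacle, and the only delicate step, is passing to the limit in the quasilinear term $|\nabla u_n|^{p-2}\nabla u_n$. Testing the equation for $u_{n+1}$ against $u_{n+1}$ itself bounds the iterates uniformly in $\w$, so up to a subsequence $u_{n+1}\rightharpoonup w_\lambda$ weakly in $\w$; dominated convergence gives $\lambda u_n^q\to \lambda w_\lambda^q$ strongly in $L^2\hookrightarrow\w'$, and a Minty monotonicity argument based on the pointwise inequality \eqref{class-ineq} for $r=2,p$ then identifies $w_\lambda$ as a weak solution of \eqref{alexis} and upgrades the convergence to strong in $\w$. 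Finally, any other positive solution $v$ satisfies $v\geq c>0$ on $\overline B$ by Corollary \ref{positive}; choosing $\varepsilon\leq c$ at the start, Proposition \ref{CP-f} keeps the entire iteration below $v$, yielding $w_\lambda\leq v$ and hence minimality.
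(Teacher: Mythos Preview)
Your overall scheme matches the paper's: supersolution $\phi$ solving \eqref{for-f} with $f\equiv 1$, a subsolution supported in $D_2$, and monotone iteration via Propositions \ref{f} and \ref{CP-f}. The paper chooses a different subsolution, namely the unique positive solution $v$ of $-\Delta_p v=\lambda v^q$ in $D_2$, $v=0$ on $\partial D_2$, extended by zero to $D_1$; Hopf's lemma on $\Gamma$ then supplies the strict subsolution inequality. Your choice of $\varepsilon\psi$ with $\psi$ the first $p$-eigenfunction on a ball $B\subset\subset D_2$ also works, but note that your reduction to the pointwise bound $\varepsilon^{p-1}\mu_1\psi^{p-1}\leq\lambda\varepsilon^q\psi^q$ silently drops the boundary integral $\int_{\partial B}|\nabla\psi|^{p-2}\partial_\nu\psi\,\varphi$ coming from integration by parts when the test function does not vanish on $\partial B$; that term has the right sign by Hopf, so the argument survives, but it should be said.

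There is, however, a genuine gap in your minimality argument. You write ``choosing $\varepsilon\leq c$ at the start'', but $c=\min_{\overline B}v$ depends on the competitor solution $v$, whereas $\varepsilon$ was already fixed when you built $w_\lambda$. For a different positive solution $v$ with $\min_{\overline B}v<\varepsilon$, your iteration need not stay below $v$, and you only obtain a (possibly different) solution $w_\lambda^{(v)}\leq v$, not $w_\lambda\leq v$. The paper's subsolution avoids this entirely: since any positive solution of \eqref{alexis} is a supersolution of $-\Delta_p v=\lambda v^q$ in $D_2$ with nonnegative boundary data, the $p$-sublinear comparison principle gives $\underline u\leq v$ in $D_2$ automatically (and $\underline u=0$ in $D_1$), so the single iteration starting at $\underline u$ lies below \emph{every} positive solution. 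If you want to keep your eigenfunction subsolution, you must add an argument that the iteration limit is independent of $\varepsilon$ (for all sufficiently small $\varepsilon$), or else switch to the paper's choice.
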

\begin{proof}
First, we find a supersolution of \eqref{alexis} for $\lambda$ small.
 By Proposition \ref{f}, let $\overline u\in \w$ be the unique positive solution to the problem
\begin{equation*}
\left\{\begin{array}{ll}
- \Delta w = 1 , \qquad & \mbox{ in } D_1, \\[5pt]
- \Delta_p w = 1 , \qquad & \mbox{ in } D_2, \\[5pt]
\displaystyle \frac{\partial w }{\partial \eta} = |\nabla w|^{p-2}
\frac{\partial w }{\partial \eta} ,  \qquad w|_{D_1} = w|_{D_2} , \qquad & \mbox{ on } \Gamma, \\[5pt]
w=0, \qquad & \mbox{ on } \partial \Omega.
\end{array} \right.
\end{equation*}

Classical regularity for $p$-laplacian operators states that  there exist $C_1,C_2>0$ such that $\|\overline u\|_{L^\infty(D_1)}\leq C_1$ and $\|\overline u\|_{L^\infty(D_2)}\leq C_2$. Furthermore, setting $\tilde \lambda=\frac{1}{(C_1+C_1)^q} $, we get
\begin{align}
\nonumber \int_{D_1} \nabla \overline u\, \nabla \varphi+\int_{D_2}|\nabla \overline u|^{p-2}\,\nabla \overline u \,\nabla \varphi = \int_\Omega \varphi = \tilde \lambda \int_\Omega (C_1+C_2)^q\,\varphi \geq \lambda \int_\Omega \overline u ^q \,\varphi,
\end{align}
for all $\lambda \leq \tilde \lambda$ and $0\leq \varphi \in \mathcal{C}_c^\infty(\Omega)$. Therefore, $\overline u$ is a supersolution of \eqref{alexis} for $\lambda \leq \tilde \lambda$. Note that this argument shows the existence of a bounded supersolution only for
$\lambda$ small. 

  Next, to get a subsolution, take $v\in W_0^{1,p}(D_2)$ the positive solution to
\begin{equation} \label{alexis44}
\left\{\begin{array}{ll}
- \Delta_p v = \lambda v^q , \qquad & \mbox{ in } D_2, \\[5pt]
v=0 , \qquad & \mbox{ on } \partial D_2 .
\end{array} \right.
\end{equation}
Note that there is a unique $v$ for every $\lambda>0$ due to the fact that $q<p-1$. Then we define
\begin{equation}\label{sub}
\underline{u} (x) = 
\left\{
\begin{array}{ll}
v (x) \qquad & x \in  D_2, \\
0 \qquad & x \in \overline D_1.
\end{array}
\right.
\end{equation}
Clearly, $\underline u $ belongs to $\w$. Moreover, due to Hopf's Lemma \cite{Sakaguchi}, we get that  $|\nabla \underline u|^{p-2}\frac{\partial \underline u}{\partial \eta}<0$ on $\Gamma$ (recal that $\eta$  is the normal unit vector to $\Gamma$ pointing outwards $D_1$), then
\begin{align}
\nonumber \int_{D_2} |\nabla \underline u|^{p-2}\nabla \underline u \nabla \varphi =\lambda \int_{D_2}\underline u^{q}\,\varphi+\int_{\Gamma}|\nabla \underline u|^{p-2}\frac{\partial \underline u}{\partial \eta}\,\varphi \leq \lambda \int_{D_2}\underline u^{q}\,\varphi,
\end{align}
for every $\lambda >0$ and $0\leq \varphi \in \mathcal{C}_c^\infty(\Omega)$. Thus, $\underline u$ is the required subsolution of \eqref{alexis} without any restriction on $\lambda>0$. We stress that, thanks to Hopf's Lemma, the above inequality is strict for tests functions that verify $\varphi >0$ on $\Gamma$. Thus, $\underline u$ is not a solution.

Clearly, $0=\underline u (x)\leq \overline u(x)$ for $x\in \overline D_1$. In addition, since $\underline u,\, \overline u$ are a solution and a supersolution respectively of problem \eqref{alexis44} for $\lambda \leq \tilde \lambda$, it follows by the comparison principle for $p-$sublinear terms in $p$-laplacian operators that 
$\underline u \leq \overline u$ a.e. in $D_2$. Finally, since $\underline u=\overline u =0$ on $\partial \Omega$, we can state that
\[
\underline u \leq \overline u,  \qquad \hbox{ a.e. in } \overline \Omega.
\]

To conclude, we use the standard monotone iteration argument in order to find a solution
for our problem. For every $n\geq 1$ we define the recurrent sequence $\{w_n\}$ by
\begin{equation}\label{recurrent}
\left\{\begin{array}{ll}
- \Delta w_n = \lambda\,w_{n-1}^q , \qquad & \mbox{ in } D_1, \\[5pt]
- \Delta_p w_n = \lambda\, w_{n-1}^q, \qquad & \mbox{ in } D_2, \\[5pt]
\displaystyle \frac{\partial w_n }{\partial \eta} = |\nabla w_n|^{p-2}
\frac{\partial w_n }{\partial \eta} , \qquad w_n|_{D_1} = w_n|_{D_2} , \qquad & \mbox{ on } \Gamma ,\\[5pt]
w_n=0, \qquad & \mbox{ on } \partial \Omega,
\end{array} \right.
\end{equation}
where $w_0=\underline u\,$. The sequence $\{w_n\}$ is well defined by Proposition \ref{f}. 
Moreover, the sequence is increasing. To check this property it suffices to prove that $w_0 \leq w_1$
(and then proceed by induction). Indeed, taking into account that $w_0$ is a subsolution of  problem \eqref{recurrent} for $n=1$, we obtain by comparison principle Proposition \ref{CP-f} that $w_0\leq w_1$. Hence, by an inductive argument: $w_0\leq w_1\leq \cdots \leq w_n$, for all $n\geq 1$. By the fact that $\overline u$ is a supersolution of  problem \eqref{recurrent} for $n=1$, with a similar argument we prove that $w_n \leq \overline u$ for every $n\in \NN$. Since $\overline u \in L^\infty(\Omega)$, the sequence $\{w_n(x)\}$ is increasing and bounded by $\overline u(x)$ for a.e. $x\in \Omega$. Let $w_\lambda(x)$ be the limit almost everywhere in $\Omega$ (i.e., $w_\lambda(x):=\lim_{n\to \infty}w_n(x)$ a.e. $x\in \Omega$) which is bounded since $\overline u$ is bounded. We claim that $w_\lambda \in \w$. Indeed, since $w_n\in \w$ we can take it as a test function in equation \eqref{recurrent}
to obtain
\[
\int_{D_1}|\nabla w_n|^2+\int_{D_2}|\nabla w_n|^{p}=\lambda\,\int_{\Omega}w_{n-1}^qw_n\leq \lambda\int_{\Omega} \overline u^{\,q+1}\leq \lambda \|\overline u\|^{\, q+1}_{L^\infty(\Omega)}|\Omega|.
\]
That is, $\{w_n \}$ is uniformly bounded in the norm of $\w$ and since this space is reflexive, up to a subsequence, $w_n$ converges weakly to $\tilde w \in \w$. Furthermore, $w_n(x)\to \tilde w(x)$ a.e. $x\in \Omega$. Finally,  by the uniqueness of the limit $w_\lambda =\tilde w\in \w$ and we conclude the claim.

  To finish the proof, we verify that $w_\lambda$ is a weak solution of \eqref{alexis}. To this end, fix $\varphi \in \mathcal{C}_c^\infty(\Omega)$ and observe that from \eqref{recurrent} we get
\[
\int_{D_1}\nabla w_n \nabla \varphi+\int_{D_2}|\nabla w_n|^{p-2}\nabla w_n \nabla \varphi =\lambda \, \int_{\Omega}w_{n-1}^q\,\varphi.
\]
Now, let $n \to \infty$ to obtain
\[
\int_{D_1}\nabla w_\lambda \nabla \varphi+\int_{D_2}|\nabla w_\lambda|^{p-2}\nabla w_\lambda \nabla \varphi =\lambda \, \int_{\Omega}w_{\lambda}^q\,\varphi,
\]
as desired.  We note that $w_\lambda$ is positive by Corollary \ref{positive} and minimal by construction. In fact, let $\tilde w_\lambda$ be another solution of problem \eqref{alexis}, by a similar argument using the comparison principle and induction in $n$ we obtain $w_n \leq \tilde w_\lambda$ for all $n \in \NN$, thus $w_\lambda(x)=\lim_{n\to \infty}w_n(x)\leq \tilde w_\lambda(x)$ a.e. $x\in \Omega$. 
\end{proof}

Now we are ready to proceed with the proof of Theorem \ref{Teo1}.

\begin{proof}[Proof of Theorem \ref{Teo1}]
First, we observe that if there exists $\hat u \in \w$, a solution to problem \eqref{alexis} for some $\hat \lambda>0$, then there exists $w_\lambda$ a minimal solution for every $\lambda \in (0,\hat \lambda)$. Indeed, for a fixed $0<\lambda<\hat \lambda$, we take $\hat u$ as a supersolution and $\underline u$ from \eqref{sub} as a subsolution of problem \eqref{alexis}. Recall that we have showed existence of this subsolution for any value of $\lambda >0$. Arguing as in the proof of Proposition \ref{existence}, it holds that the sequence $\underline u <w_1\leq w_2\leq \cdots \leq w_n \leq \cdots \leq \hat  u$ is uniformly bounded in $\w$ and, by our previous argument, there exists $w_\lambda$, the minimal solution. In this way we set
\[
\lambda^*=\sup \left\{0\leq \lambda\,:\, \hbox{exists a solution to problem }\eqref{alexis} \right\}.
\]
By Propositon \ref{existence} it follows that $\lambda^*>0$. Thus, for every $0<\lambda <\lambda^*$ there exists $w_\lambda$ a minimal positive solution.

Next, in order to prove that $\lambda^*<\infty$,  we take again  $v\in W_0^{1,p}(D_2)$ the unique positive solution to \eqref{alexis44} and let us observe that
$$
v (x) = \lambda^{\gamma} \,v_1(x), \qquad \mbox{ in } D_2,
$$
with $\gamma=\frac{1}{p-1-q}>0$ and $v_1$ the unique solution to 
\begin{equation*}
\left\{\begin{array}{ll}
- \Delta_p v_1 =  (v_1)^q , \qquad & \mbox{ in } D_2, \\[5pt]
v_1=0 , \qquad & \mbox{ on } \partial D_2.
\end{array} \right.
\end{equation*}
Now, fix a ball $B\subset \subset D_2$.
Since $v_1 \geq c>0$ in $B$, it holds that
$$
v (x) \geq c  \lambda^{\gamma}, \qquad x \in B.
$$ 
That is, $v$ is uniformly large in $B$ for $\lambda$ large.

Now, let us consider $z$ the solution to
\begin{equation} \label{alexis.99.kk}
\left\{\begin{array}{ll}
- \Delta z = 0 , \qquad & \mbox{ in } D_1 ,\\[5pt]
- \Delta_p z = 0 , \qquad & \mbox{ in } D_2, \setminus B \\[5pt]
\displaystyle \frac{\partial z }{\partial \eta} = |\nabla z|^{p-2}
\frac{\partial z }{\partial \eta} , \qquad z|_{D_1} = z|_{D_2} ,\qquad & \mbox{ on } \Gamma, \\[5pt]
z=0, \qquad & \mbox{ on } \partial \Omega, \\[5pt]
z= c \lambda^{\gamma} , \qquad & \mbox{ on } \partial B.
\end{array} \right.
\end{equation}
Such solution can be obtained as the minimum from the following coercive functional
$$
H(u) = \int_{D_1} \frac{|\nabla u|^2}{2} \, dx +
\int_{D_2\setminus B} \frac{|\nabla u|^p}{p} \, dx  
$$
in the set $\mathcal{A}=\{u\in \tilde{\mathcal{W}}(\Omega\setminus B)\,:\,u_{|_{\partial B}}\equiv    c \lambda^{\gamma}  \}$ being $\tilde{\mathcal{W}}(\Omega\setminus B)$ the Banach space defined as
\[
\tilde{\mathcal{W}}(\Omega\setminus B)=\left\{u\in W^{1,2}(\Omega\setminus B)\cap W^{1,p}(D_2\setminus B)\,:\, u_{|_{\partial \Omega}}\equiv 0   \right\}.
\]
We note that such minimum is attained because $\mathcal{A}$ is a nonempty convex and weakly close subset of $\tilde{\mathcal{W}}(\Omega\setminus B) $.

Now fix a different ball $B_2 \subset \subset D_1$. We claim that $z$ is uniformly large in $B_2$ when $\lambda$ is large. Indeed, $z$ should be large on $\Gamma$ and therefore large in $B_2$.

In order to prove the  nonexistence of solutions to 
\eqref{alexis} for $\lambda$ large.
Assume, arguing by contradiction, that there is a solution $u$ for $\lambda $ large. By a comparison argument,
we have that 
$$
u \geq v, \qquad \mbox{in } D_2.
$$
Hence $u$ is a supersolution of problem \eqref{alexis.99.kk} in $\tilde{\mathcal{W}}(\Omega\setminus B)$ and due to Proposition \ref{strong-principle}  in the space $\tilde{\mathcal{W}}(\Omega\setminus B)$, it holds by comparison principle
$$
u \geq z \qquad \mbox{ in } B_2.
$$
This gives a contradiction, since the solution to the parabolic problem
\begin{equation} \label{alexis.99.kk.parabol}
\left\{\begin{array}{ll}
w_t - \Delta w = \lambda w^q , \qquad & \mbox{ in } B_2 \times (0,T) ,\\[5pt]
w=0, \qquad & \mbox{ on } \partial B_2  \times (0,T) , \\[5pt]
w_0= z , \qquad & \mbox{ in }  B_2,
\end{array} \right.
\end{equation}
blows up in finite time (due to the fact that $z$ is uniformly large in the ball $B_2$, see for instance \cite{Ball}) and 
also must satisfy 
$$
w(x,t) \leq u(x),
$$
since $u$ is a supersolution to the parabolic problem \eqref{alexis.99.kk.parabol}.

Finally, we note that if $\lambda_1 \leq \lambda_2<\lambda^*$, taking $w_{\lambda_2}$ as a supersolution of problem \eqref{alexis} for $\lambda=\lambda_1$ and arguing as the proof of Proposition \ref{existence} we obtain $w_{\lambda_1} \leq w_{\lambda_2}$. That is, the family of functions $\{w_{\lambda}\}_{0<\lambda<\lambda^*}$ is increasing with $\lambda$.
\end{proof}

\section{Multiplicity of solutions} \label{sect-multi}
In this section we show that problem \eqref{alexis} has at least two positive different solutions provided $p<2^*$ if $N\geq 3$ (with no restriction on $p$ for $N=1,2$) and $D_2\subset \subset \Omega$. Concretely, we prove that \eqref{alexis} has a first solution which corresponds to the global minimum of an appropriated functional and then a second solution is found by means of Mountain Pass theory.

Since our objective is to find positive solutions of our problem, we observe that they correspond to critical points of the following functional
\begin{equation*}
G_\lambda(u) = \int_{D_1} \frac{|\nabla u|^2}{2} \, dx +
\int_{D_2} \frac{|\nabla u|^p}{p} \, dx - \lambda \int_{\Omega} \frac{u_+^{q+1}}{q+1} \, dx,
\end{equation*}
where $u_+=\max \{ u,0   \}$. We will write it simply $G$ instead $G_\lambda$ when no confusion can arise. Of course, $F(u)=G(u)$ whenever $u\geq0$ and then, positive critical points of $G$ correspond to positive solutions of $\eqref{alexis}$.

  In general, for a $p(x)$ discontinuous, the $\mathcal{C}^1(\Omega)$-regularity of minimizers of $G$ are not satisfied, in fact, one can find some counter-examples in \cite{Zhikov}. However, as it mentioned in \cite[Theorem 9.15]{Filandeses} which refers to \cite{FZ}, for our class of discontinuous exponents one can arrive at locally Hölder continuity (see also \cite{Fusco}). Therefore, due to lack of $\mathcal{C}^1$-results in whole $\Omega$, we impose that $D_2\subset \subset \Omega$ in order to get regularity close to $\partial \Omega$.  Concretely, as we will see later, we need that local minimizers of functional $G$ belongs to $\mathcal{C}^1(F_\delta)\cap \mathcal{C}(\overline \Omega)$ where $F_\delta$ is a small strip around the boundary,
  \begin{equation}\label{franja}
  F_\delta = \{x\in \Omega \,:\,  \hbox{dist}(x,\partial \Omega)<\delta \}
  \end{equation}
being $\delta$ enough small to ensure that $F_{3\delta} \subset   D_1$ and $\partial F_\delta$ is smooth.

Following partially the ideas in \cite{ABC}, we begin by showing the next result.
\begin{lemma}\label{topology}
For every $\lambda\in (0,\lambda^*)$ there exists a local minimum of $G$ in the $\mathcal{C}(\overline \Omega)\cap \mathcal{C}^1(F_\delta)$-topology. 
\end{lemma}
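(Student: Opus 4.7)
The plan is to adapt the classical concave--convex argument of Ambrosetti--Brezis--Cerami \cite{ABC} to the present mixed-operator setting. Fix $\lambda\in(0,\lambda^*)$ and choose $\mu$ with $\lambda<\mu<\lambda^*$. By Theorem \ref{Teo1} there is a minimal solution $w_\mu$ at level $\mu$, and since $\mu>\lambda$ it is a strict supersolution of \eqref{alexis} at level $\lambda$; on the other hand the function $\underline u$ from \eqref{sub} is a subsolution at level $\lambda$ with $\underline u\le w_\mu$. The idea is to minimise $G_\lambda$ on the convex, order-bounded set
\[
\mathcal{K}=\bigl\{u\in\w:\; 0\le u\le w_\mu \text{ a.e.\ in }\Omega\bigr\}
\]
and then lift the resulting minimiser to an interior point of $\mathcal{K}$ in the $\mathcal{C}(\overline\Omega)\cap\mathcal{C}^1(F_\delta)$ topology.

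Existence of a minimiser $z_\lambda\in\mathcal{K}$ is standard: on $\mathcal{K}$ the concave term is uniformly bounded by $\lambda(q+1)^{-1}\|w_\mu\|_{L^\infty}^{q+1}|\Omega|$, so $G_\lambda$ is coercive in $[\,\cdot\,]_\w$, while the convex gradient terms are weakly lower semicontinuous in $\w$. Since $\underline u$ is supported in $D_2$ with $q+1<p$, testing $G_\lambda$ against $t\underline u$ for small $t>0$ as in \eqref{MP-geom} yields $G_\lambda(t\underline u)<0$, so $z_\lambda\not\equiv 0$. To see that $z_\lambda$ solves \eqref{alexis}, I would derive the usual variational inequality from constrained minimality and combine it with a standard truncation/penalisation argument (as in \cite{ABC,GAPM}) to show that $z_\lambda$ solves the modified problem in which $\lambda u^q$ is replaced by $\lambda(\min\{u_+,w_\mu\})^q$. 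Proposition \ref{positive123} then yields $z_\lambda>0$ in $\Omega$; applying the strong maximum principle to the difference $w_\mu-z_\lambda$ (which satisfies an elliptic inequality with a nontrivial nonnegative source thanks to $\mu>\lambda$) gives $z_\lambda<w_\mu$ in $\Omega$. Consequently the truncation is inactive and $z_\lambda$ is a weak solution of \eqref{alexis}.

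It remains to upgrade this pointwise interiority to interiority in the mixed topology. Global continuity $z_\lambda,w_\mu\in\mathcal{C}(\overline\Omega)$ follows from the local Hölder regularity results of \cite{Fusco,FZ}. On $F_{2\delta}\subset\subset D_1$ the equation reduces to the linear one $-\Delta z_\lambda=\lambda z_\lambda^q$ with bounded right-hand side, so classical Schauder estimates give $z_\lambda,w_\mu\in\mathcal{C}^{1,\alpha}(\overline{F_\delta})$. Hopf's lemma applied to $z_\lambda$ and separately to $w_\mu-z_\lambda$ on $F_\delta$ produces a constant $c_0>0$ with
\[
z_\lambda(x)\ge c_0\,\mathrm{dist}(x,\partial\Omega)\quad\text{and}\quad w_\mu(x)-z_\lambda(x)\ge c_0\,\mathrm{dist}(x,\partial\Omega)\quad\text{for }x\in F_\delta,
\]
while on the compact set $\overline{\Omega\setminus F_\delta}$ continuity and the strict inequalities $0<z_\lambda<w_\mu$ give $m>0$ with $z_\lambda\ge m$ and $w_\mu-z_\lambda\ge m$. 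Combining the two regions, any $v\in\w$ with $\|v-z_\lambda\|_{\mathcal{C}(\overline\Omega)}+\|v-z_\lambda\|_{\mathcal{C}^1(F_\delta)}$ sufficiently small satisfies $0\le v\le w_\mu$ pointwise, hence lies in $\mathcal K$, so $G_\lambda(v)\ge G_\lambda(z_\lambda)$.

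I expect the regularity/Hopf step near $\partial\Omega$ to be the main obstacle: globally only Hölder continuity is available (because $p(x)$ jumps across $\Gamma$), and this is precisely what forces the hypothesis $D_2\subset\subset\Omega$ and the need to measure perturbations in the stronger $\mathcal{C}^1(F_\delta)$ topology near the boundary, where both constraints $v\ge 0$ and $v\le w_\mu$ degenerate as $z_\lambda$ and $w_\mu$ vanish simultaneously on $\partial\Omega$.
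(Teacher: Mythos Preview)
Your argument is correct and is essentially the paper's own proof: trap a minimiser of a truncated/constrained problem between ordered barriers coming from Theorem~\ref{Teo1}, invoke \cite{Fusco,FZ} for interior H\"older continuity and classical elliptic estimates in $F_\delta\subset D_1$ for $\mathcal C^{1}$ regularity near $\partial\Omega$, and then use the strong maximum principle and Hopf's lemma to convert the strict pointwise inequalities into a full $\mathcal C(\overline\Omega)\cap\mathcal C^{1}(F_\delta)$-neighbourhood on which $G_\lambda$ coincides with the constrained functional. The only difference is your choice of lower barrier: the paper takes a \emph{positive} minimal solution $u_1$ at some $\lambda_1<\lambda$ rather than $0$, a choice immaterial for the present lemma but convenient afterwards, since it guarantees the local minimum satisfies $\tilde u>u_1$ and is therefore also a critical point of the truncated functional $\widehat G_\lambda$ used in the mountain-pass construction of the second solution.
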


\begin{proof}
Fixed $0<\lambda<\lambda^*$, we take $\lambda_1,\lambda_2>0$ such that $\lambda_1<\lambda<\lambda_2<\lambda^*$ and let us denote by $u_1$ and $u_2$ their respective minimal solutions for $\lambda_1$ and $\lambda_2$ obtained  in Theorem \ref{Teo1}.  Since the minimal solutions are increasing, we have $ u_1  \leq u_2$. Even more, since $\lambda_1<\lambda_2$ it follows  by the Strong Maximum Principle applied in each region $D_i,\, i=1,2$ (see for instance \cite{Dama,Guedda-Veron}) and the Hopf Maximum Principle that
\begin{equation*}
\begin{array}{cc}
u_1<u_2, &  \hbox{ in } \Omega, \\[10pt]
\displaystyle \frac{\partial u_2}{\partial \nu}<\frac{\partial u_1}{\partial \nu}<0, & \hbox{ on } \partial \Omega,
\end{array}
\end{equation*}
being $\nu$ the outer unit normal on $\partial \Omega$.

  Consider, 
  \begin{equation*}
  h(x,s)=
  \left\{
  \begin{array}{lc}
  u_2^q(x)\,, & s\geq u_2(x),\\[5pt]
  s^q\,, & u_1(x)<s<u_2(x), \\[5pt]
  u_1^q(x)\,, & s\leq u_1(x),
  \end{array}
  \right.
  \end{equation*}
and the truncated functional
\[
\tilde{G}(u)=\int_{D_1}\frac{|\nabla u(x)|^2}{2}+\int_{D_2}\frac{|\nabla u(x)|^p}{p}-\lambda \int_{\Omega}H(x,u)
\]
where 
$u \in \w$ and $H(x,u)=\int_0^uh(x,s)ds$. Clearly, $\tilde G$ is coercive and weakly lower semicontinuous (because $q<\frac{N+2}{N-2}$) . Hence, there exists its global minimum at some $\tilde u \in \w$ and for every $0\leq \varphi \in \mathcal{C}_c^\infty(\Omega)$ it holds
\begin{align*}
\int_{D_1}\nabla \tilde u(x) \nabla \varphi(x) +\int_{D_2}|\nabla \tilde u(x)|^{p-2}\nabla \tilde u(x) \nabla \varphi(x)  &= \lambda \int_{\Omega}h(x,\tilde u) \varphi(x)
\\
& > \lambda_1 \int_{\Omega}u_1^q(x)\varphi(x).
\end{align*}
That is, $\tilde u$ is a supersolution of \eqref{for-f} with $f=\lambda_1 u_1^q$ and since $u_1$ is a solution it follows by the comparison principle from Proposition \ref{CP-f} that $u_1\leq \tilde u$. We proceed analogously to obtain that $\tilde u \leq u_2$. Moreover, using again the Strong Maximum Principle and the Hopf Maximum Principle we obtain that 
\begin{align}\label{ineq1}
0<u_1<\tilde u<u_2, &\quad \hbox{ in } \Omega,
\end{align}
and
\begin{align}\label{ineq2}
\displaystyle \frac{\partial u_2}{\partial \nu}<\frac{\partial \tilde u}{\partial \nu}<\frac{\partial u_1}{\partial \nu}<0, & \quad \hbox{ on } \partial \Omega.
\end{align}
Next, we claim that $\tilde u \in \mathcal{C}(\overline \Omega)\cap \mathcal{C}^1(F_\delta)$. Indeed, let $K=\Omega \setminus F_{\delta/2}$ be a compact set. Since $\tilde u$ is a local minimizer and $u_1,\,u_2$ are bounded then $\tilde G$ is in the framework of  the work \cite{FZ}. It follows a higher integrability of the gradient of $\tilde u$ which implies locally Hölder continuity, hence $\tilde u \in \mathcal{C}^\alpha(K)$. Moreover, $\tilde u$ satisfies the equation
\begin{equation*}
\left \{
\begin{array}{lc}
-\Delta \tilde u = \lambda \tilde u^q, & \hbox{ in } F_\delta,
\\
\tilde u=0, & \hbox{ on } \partial \Omega,
\end{array}
\right.
\end{equation*}
and $\tilde u$ is continuous on $\partial F_\delta \cap \Omega$. Then, the well-known classical regularity for the laplacian operator (see \cite{GT}) implies that $\tilde u\in \mathcal{C}^1(F_\delta)\cap \mathcal{C}(\overline F_\delta)$ and the claim is proved.

  Finally, in virtue of inequalities \eqref{ineq1} and \eqref{ineq2}, there exists $\varepsilon>0$ sufficiently small such that $u_1<v<u_2$ in $\Omega$ for all $v\in B_{\varepsilon}(\tilde u)$ the ball of center $\tilde u$ and radius $\varepsilon$ in the topology of $\mathcal{C}(\overline \Omega)\cap \mathcal{C}^1(F_\delta)$. Therefore,
  \[
  G(v)=\tilde G(v)\geq \tilde G(\tilde u)=G(\tilde u), \qquad \hbox{ for all } v\in B_{\varepsilon}(\tilde u).
  \]
Equivalently, $\tilde u$ is a local minimum of $G$ in $\mathcal{C}(\overline \Omega)\cap \mathcal{C}^1(F_\delta)$-topology.
\end{proof}

\begin{remark}\label{regul}{\rm
Concerning the regularity of local minimizers of functional $\tilde G$ in the proof of above lemma, the same reasoning  applied to the functional $G$ states that local minimizers of $G$ also belong to $\mathcal{C}(\overline \Omega)\cap \mathcal{C}^1(F_\delta)$.
}
\end{remark}

Our first goal is to show that there exists a local minimum of $G$ in $\w$. In fact, we will prove that $\tilde u$, the local minimum in $\mathcal{C}(\overline \Omega)\cap \mathcal{C}^1(F_\delta)$-topology of the proof of Lemma \ref{topology}, is the desired local minimizer. To prove it, we argue by contradiction following closely the ideas of  \cite[Lemma 1]{Figue} (see also \cite{BN}). Thus, we suppose that there exists $\varepsilon_0>0$ such that 
\begin{equation}\label{v}
G(v_\varepsilon):=\min \left \{ G(u) \, :\,   u \in V_\varepsilon  (\tilde u)\right \}<G(\tilde u), \quad \hbox{ for all } \varepsilon<\varepsilon_0,
\end{equation}
where $V_\varepsilon (\tilde u)$ is the closed set
 \[
 V_\varepsilon  (\tilde u)=\left \{u\in \w \,:\, \int_{D_1}\frac{|\nabla(u-\tilde u)|^2}{2} + \int_{D_2}\frac{|\nabla(u-\tilde u)|^p}{p}  \leq \varepsilon  \right \}.
 \]

  Note that such minimum is attained as $G$ is weakly lower semicontinuous and $V_\varepsilon  (\tilde u)$ is weakly compact in the reflexive space $\w$. Moreover, $v_\varepsilon \to \tilde u$ as $\varepsilon \to 0$ in norm in $\w$.

  The strategy is to prove that $v_\varepsilon \to \tilde u$ in $\mathcal{C}(\overline \Omega)\cap \mathcal{C}^1(F_\delta)$-topology contradicting the fact that $\tilde u$ is a local minimum in $\mathcal{C}(\overline \Omega)\cap \mathcal{C}^1(F_\delta)$-topology by the above lemma.
  
    For that purpose, we note that the corresponding Euler equation for $v_\varepsilon$ contains a nonpositive  Lagrange multiplier $\mu_\varepsilon\leq 0$. Namely,  $v_\varepsilon$ must be satisfy the following:
    \begin{align}\label{eq_v}
  \nonumber  \int_{D_1}\nabla u \nabla \varphi &+\int_{D_2}|\nabla u|^{p-2}\nabla u \nabla \varphi -\int_{\Omega}g(u)\varphi
    \\
 &\qquad =\mu_\varepsilon  \left[\int_{D_1}\nabla(u-\tilde u)\nabla \varphi + \int_{D_2}|\nabla(u-\tilde u)|^{p-2}\nabla(u-\tilde u)\nabla \varphi \right],
    \end{align}
    for all $\varphi \in \w$, being $g(u)=\lambda u_+^q$.
    
    Our first step is to prove that $v_\varepsilon$ are uniformly $L^\infty$-bounded by a constant independent of $\varepsilon$.
\begin{lemma}\label{bounded}
Given $0 \leq \varepsilon <\varepsilon_0<1$, there exists $M>0$ such that $v_\varepsilon$ defined by \eqref{v} satisfies $$\|v_\varepsilon \|_{L^\infty(\Omega)}\leq M,$$ for all $\varepsilon \in [0,\varepsilon_0)$.
\end{lemma}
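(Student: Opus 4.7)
The strategy is a Stampacchia-type truncation applied directly to the Euler equation \eqref{eq_v}, exploiting the favorable sign $\mu_\varepsilon \leq 0$ of the Lagrange multiplier together with the smallness $\|\nabla(v_\varepsilon - \tilde u)\|_{L^2(D_1)} + \|\nabla(v_\varepsilon - \tilde u)\|_{L^p(D_2)} = O(\varepsilon^{1/p})$ supplied by the constraint $v_\varepsilon \in V_\varepsilon(\tilde u)$. First I record two facts. (i) $\tilde u \in L^\infty(\Omega)$: Lemma \ref{topology} gives $\tilde u \in \mathcal{C}(\overline\Omega) \cap \mathcal{C}^1(F_\delta)$, and the sandwich $u_1 \leq \tilde u \leq u_2$ combined with the local H\"older regularity of \cite{FZ} bounds $\tilde u$ in the interior; put $k_0 := \|\tilde u\|_{L^\infty(\Omega)}$ and $A_k = \{v_\varepsilon > k\}$ for $k \geq k_0$. (ii) $v_\varepsilon$ is uniformly bounded in $W_0^{1,2}(\Omega)$ since $[v_\varepsilon]_{\w} \leq [\tilde u]_{\w} + [v_\varepsilon - \tilde u]_{\w}$ and the last term is $O(\varepsilon^{1/p})$, hence uniformly bounded in $L^{2^*}(\Omega)$ by Sobolev, giving the baseline decay $|A_k| \leq C/k^{2^*}$.

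Next I would test \eqref{eq_v} with $\varphi = (v_\varepsilon - k)_+ \in \w$. Using $\nabla \varphi = \nabla v_\varepsilon \, \mathbf{1}_{A_k}$, decomposing $\nabla(v_\varepsilon - \tilde u) = \nabla v_\varepsilon - \nabla \tilde u$, and observing the identity
\[
|\nabla(v_\varepsilon - \tilde u)|^{p-2}\nabla(v_\varepsilon - \tilde u) \cdot \nabla v_\varepsilon = |\nabla(v_\varepsilon - \tilde u)|^p + |\nabla(v_\varepsilon - \tilde u)|^{p-2}\nabla(v_\varepsilon - \tilde u) \cdot \nabla \tilde u,
\]
the Lagrange-multiplier terms split into a ``principal'' piece $\mu_\varepsilon\bigl[\int_{D_1\cap A_k}|\nabla v_\varepsilon|^2 + \int_{D_2 \cap A_k}|\nabla(v_\varepsilon - \tilde u)|^p\bigr] \leq 0$, plus a cross remainder involving $\nabla \tilde u$. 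Transferring the nonpositive principal piece to the left-hand side \emph{strengthens} coercivity, and yields
\[
\int_{A_k \cap D_1}|\nabla v_\varepsilon|^2 + \int_{A_k \cap D_2}|\nabla v_\varepsilon|^p \leq \lambda \int_{A_k} v_\varepsilon^q(v_\varepsilon - k)\, dx + |\mu_\varepsilon|\, R_\varepsilon(k),
\]
with $R_\varepsilon(k) \leq \|\nabla\tilde u\|_{L^2(D_1 \cap A_k)} \|\nabla v_\varepsilon\|_{L^2(D_1 \cap A_k)} + (p\varepsilon)^{(p-1)/p}\|\nabla\tilde u\|_{L^p(D_2 \cap A_k)}$ by H\"older and the constraint.

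The delicate step is to estimate $|\mu_\varepsilon|\, R_\varepsilon(k)$ independently of $\mu_\varepsilon$, since $\mu_\varepsilon$ is a priori only known to be nonpositive. The plan has two parts. (a) Obtain a first a priori bound on $|\mu_\varepsilon|$ by testing \eqref{eq_v} against $v_\varepsilon - \tilde u$ itself and using the monotonicity inequality \eqref{class-ineq} together with the strong convergence $v_\varepsilon \to \tilde u$ in $L^{q+1}(\Omega)$; this provides a bound of the form $|\mu_\varepsilon| \leq C\,\varepsilon^{-\beta}$ with $\beta$ compatible with the powers appearing in $R_\varepsilon(k)$. (b) Estimate the two pieces of $R_\varepsilon(k)$: the $D_2$-piece inherits the factor $(p\varepsilon)^{(p-1)/p}$ from the constraint, while the $D_1$-piece uses absolute continuity of the integral for $\nabla\tilde u \in L^2(D_1)$ to make $\|\nabla\tilde u\|_{L^2(D_1 \cap A_k)}$ small in $|A_k|$. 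A Young splitting calibrated so that the factor $|\mu_\varepsilon|$ is precisely cancelled by the $\varepsilon$-powers coming from $V_\varepsilon(\tilde u)$ then yields $|\mu_\varepsilon|\, R_\varepsilon(k) \leq \tfrac{1}{2}\int_{A_k \cap D_1}|\nabla v_\varepsilon|^2 + \tfrac{1}{2}\int_{A_k \cap D_2}|\nabla v_\varepsilon|^p + \omega(|A_k|)$ with $\omega(t) \to 0$ as $t \to 0$ and $C$ independent of $\varepsilon$. This is the content of Proposition~\ref{trick-Stampacchia}.

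After absorbing the first two terms into the coercive left-hand side, the Sobolev embedding $W_0^{1,2}(\Omega) \hookrightarrow L^{2^*}(\Omega)$ (applicable since $v_\varepsilon \in W_0^{1,2}(\Omega)$; here the subcriticality $p<2^*$ enters, placing $q+1 < p < 2^*$ so that H\"older closes on the $v_\varepsilon^q (v_\varepsilon - k)$ term) converts the estimate into a decay inequality of the form $(k-h)|A_k| \leq C\,|A_h|^{1+\eta}$ for all $M \geq k > h \geq k_0$, with $\eta > 0$ and $C$ independent of $\varepsilon$. The classical Stampacchia iteration lemma then gives $|A_M| = 0$ for some finite $M$ depending only on $\lambda, p, q, N, |\Omega|$ and $\|\tilde u\|_{L^\infty}$, which is the desired uniform bound $\|v_\varepsilon\|_{L^\infty(\Omega)} \leq M$. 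I expect the main obstacle to be exactly steps (a)--(b): the naive Young's inequality leaves a residue that depends on $|\mu_\varepsilon|^{p'}$ or $|\mu_\varepsilon|^2$, which are not obviously bounded, and only the fine interplay between the variational constraint $V_\varepsilon(\tilde u)$, the absolute continuity of $\nabla\tilde u$, and the correct balance of Young exponents makes the scheme go through.
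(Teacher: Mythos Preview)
Your proposal contains a genuine gap at exactly the point you flag as ``the main obstacle'', and the paper circumvents it by a different choice of test function.

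The paper does \emph{not} test \eqref{eq_v} with $(v_\varepsilon-k)_+$. Instead it first subtracts the Euler equation for $\tilde u$ from \eqref{eq_v}, and then tests with $\varphi=T_k(v_\varepsilon-\tilde u)$. With this choice the Lagrange term becomes
\[
\mu_\varepsilon\left[\int_{D_1\cap\Omega_k}|\nabla(v_\varepsilon-\tilde u)|^2+\int_{D_2\cap\Omega_k}|\nabla(v_\varepsilon-\tilde u)|^p\right]\le 0,
\]
with no cross remainder whatsoever; it is simply dropped. The left-hand side is handled via the monotonicity inequality \eqref{class-ineq}, and the right-hand side $\int_\Omega(g(v_\varepsilon)-g(\tilde u))T_k(v_\varepsilon-\tilde u)$ is estimated by H\"older. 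A Stampacchia iteration (with a bootstrap on the Lebesgue exponent $r$, which is where $p<2^*$ enters) then gives the uniform $L^\infty$ bound on $v_\varepsilon-\tilde u$.

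Your route, testing with $(v_\varepsilon-k)_+$, produces the cross term $|\mu_\varepsilon|\,R_\varepsilon(k)$, and your steps (a)--(b) do not close. Testing \eqref{eq_v} with $v_\varepsilon-\tilde u$ and using the constraint yields only $|\mu_\varepsilon|\lesssim \varepsilon^{-1}$, while the $D_2$ part of $R_\varepsilon(k)$ carries the factor $\varepsilon^{(p-1)/p}$; the product is of order $\varepsilon^{-1/p}$, which blows up as $\varepsilon\to 0$. The $D_1$ cross term is worse: it carries no $\varepsilon$-smallness at all, so $|\mu_\varepsilon|\,\|\nabla\tilde u\|_{L^2(D_1\cap A_k)}\|\nabla v_\varepsilon\|_{L^2(D_1\cap A_k)}$ cannot be made uniformly bounded by absolute continuity alone. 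No ``calibrated Young splitting'' repairs this, because the deficit is in the power of $\varepsilon$, not in the choice of conjugate exponents. Finally, your appeal to Proposition~\ref{trick-Stampacchia} is circular: that proposition \emph{uses} Lemma~\ref{bounded} as input and concerns the convergence $v_\varepsilon\to\tilde u$ in $\mathcal{C}(\overline\Omega)\cap\mathcal{C}^1(F_\delta)$; it does not supply any estimate on $|\mu_\varepsilon|R_\varepsilon(k)$.

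The fix is to change the test function to $T_k(v_\varepsilon-\tilde u)$ after subtracting the equation for $\tilde u$, so that the Lagrange term acquires a sign and $\mu_\varepsilon$ never needs to be estimated.
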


\begin{proof} We adapt  the techniques applied in \cite{GAPM} by using the classical lemma due to Stampacchia \cite{Stamp}. First,  since 
\[
\int_{D_1}|\nabla \tilde u |^{p-2}\nabla \tilde u \nabla \phi + \int_{D_2}\nabla \tilde u \phi =\lambda \int_{\Omega}\tilde u^q\phi, \quad \forall \, \phi \in \mathcal{C}(\overline \Omega)\cap \mathcal{C}^1(F_\delta),
\]
and a density argument, the above equality holds for test functions belonging to $\w$.
Hence, we write equation \eqref{eq_v}, which satisfies $v_\varepsilon$, as follows
 \begin{align*}
  \int_{D_1}\nabla (u-\tilde u) \nabla \varphi &+\int_{D_2}(|\nabla u|^{p-2}\nabla u -|\nabla \tilde u|^{p-2}\nabla \tilde u)\nabla \varphi -\int_{\Omega}(g(u)-g(\tilde u))\varphi
    \\
 &\qquad =\mu_\varepsilon  \left[\int_{D_1}\nabla(u-\tilde u)\nabla \varphi + \int_{D_2}|\nabla(u-\tilde u)|^{p-2}\nabla(u-\tilde u)\nabla \varphi \right],
    \end{align*}
 for all $\varphi \in \w$.   
We consider now for every $k\in \RR^+$ the function $T_k:\RR \to \RR$ given by
\begin{equation*}
T_k(s)=
\left \{
\begin{array}{ll}
s+k, &\quad s\leq -k,
\\
0, & \quad -k<s\leq k,
\\
s-k, &\quad s>k.
\end{array}
\right.
\end{equation*}
Thus, taking $$\varphi =T_k(u-\tilde u)$$ as test function in the previous equation we get
 \begin{align*}
  \int_{D_1\cap \Omega_k}\nabla (u-\tilde u) \nabla T_k(u-\tilde u) +\int_{D_2\cap \Omega_k}(|\nabla u|^{p-2}\nabla u -|\nabla \tilde u|^{p-2}\nabla \tilde u)\nabla T_k(u-\tilde u)
    \\
=\int_{\Omega}(g(u)-g(\tilde u)) T_k(u-\tilde u)+\mu_\varepsilon  \left[\int_{D_1\cap \Omega_k}|\nabla(u-\tilde u)|^2 + \int_{D_2 \cap \Omega_k}|\nabla(u-\tilde u)|^{p} \right],
    \end{align*}
where $\Omega_k \equiv \left \{x\in \Omega \, : \, |u(x)-\tilde u(x)|>k    \right \}$.

  Hence, dropping the negative term $$\mu_\varepsilon  \left[\int_{D_1}|\nabla(u-\tilde u)|^2 + \int_{D_2}|\nabla(u-\tilde u)|^{p} \right]$$ and using the inequality \eqref{class-ineq}, we arrive to 
\begin{equation}\label{paso1}
\begin{array}{l}
\displaystyle
 \int_{D_1\cap \Omega_k} |\nabla T_k(u-\tilde u)|^2+c(p)\int_{D_2\cap \Omega_k} |\nabla T_k(u-\tilde u)|^p  
\\[10pt]
\displaystyle \qquad \leq \int_{\Omega}(g(u)-g(\tilde u)) T_k(u-\tilde u).
\end{array}
\end{equation}
We can also assume that $\|u-\tilde u\|_{L^r(\Omega)}\leq R$ independent of $\varepsilon$. Note that due $u \in V_\varepsilon(\tilde u)$ then $r$ is at least equal to $2^*$. Therefore, since $|T_k(s)|\leq |s|$ and applying Hölder inequality for this $r\geq 2^*$, the right hand side can be estimated as follows
\begin{equation}\label{paso2}
\begin{array}{l}
\displaystyle \int_{\Omega}(g(u)-g(\tilde u)) T_k(u-\tilde u)\leq \int_{\Omega_k}|g(u)-g(\tilde u)||T_k(u-\tilde u)|
\\[10pt] \qquad \displaystyle
 \leq \lambda \int_{\Omega_k}(|u|^q+|\tilde u|^q)|T_k(u-\tilde u)|
\\[10pt] \qquad \displaystyle \leq \lambda \left( \int_{\Omega_k}(|u|^q+|\tilde u|^q)^{\frac{r}{q}}\right)^{\frac{q}{r}} \left(  \int_{\Omega_k}|T_k(u-\tilde u)|^{2^*}   \right)^{\frac{1}{2^*}} \left|\Omega_k   \right|^{1-\frac{q}{r}-\frac{1}{2^*}}
\\[10pt] \qquad \displaystyle
 \leq C_1 \left(  \int_{\Omega_k}|T_k(u-\tilde u)|^{2^*}   \right)^{\frac{1}{2^*}} \left|\Omega_k   \right|^{1-\frac{q}{r}-\frac{1}{2^*}},
\end{array}
\end{equation}
for some positive constant $C_1(\lambda, q, N, R,\|\tilde u\|_{L^r(\Omega)})$. For the reader's convenience, we will explain the last inequality in more detail, we have
$$
\begin{array}{l}
\displaystyle \lambda \left( \int_{\Omega_k}(|u|^q+|\tilde u|^q)^{\frac{r}{q}}\right)^{\frac{q}{r}}\leq c_1(\lambda) \left( \int_\Omega |u|^{r}  +\int_\Omega |\tilde u|^{r}  \right)^{\frac{q}{r}}
\\[10pt] \qquad \displaystyle
\leq c_2(\lambda, q, N, \|u\|_{L^r(\Omega)},\|\tilde u\|_{L^r(\Omega)})
\\[10pt] \qquad \displaystyle
\leq c_3(\lambda, q, N, R, \|\tilde u\|_{L^r(\Omega)}).
\end{array}
$$

  Replacing inequality \eqref{paso2} in \eqref{paso1} we have that
\begin{align}\label{paso12}
\int_{D_1\cap \Omega_k} |\nabla T_k(u-\tilde u)|^2+c(p)\int_{D_2\cap \Omega_k} |\nabla T_k(u-\tilde u)|^p
\\
\nonumber \leq C_1 \left(  \int_{\Omega_k}|T_k(u-\tilde u)|^{2^*}   \right)^{\frac{1}{2^*}} \left|\Omega_k   \right|^{1-\frac{q}{r}-\frac{1}{2^*}}\,.
\end{align}
Concerning to the left hand side, we use the inequality
\[
a+b^{\,c}\geq 2^{-c}(a+b)^c, \qquad 0\leq a,b \leq 1 \leq c,
\]
to obtain
\begin{equation} \label{lado-derecho}
\begin{array}{l}
\displaystyle \int_{D_1\cap \Omega_k} |\nabla T_k(u-\tilde u)|^2+c(p)\int_{D_2\cap \Omega_k} |\nabla T_k(u-\tilde u)|^p 
\\[10pt] \qquad \displaystyle
\geq C_2\left( \int_{D_1\cap \Omega_k} |\nabla T_k(u-\tilde u)|^2 +\left( \int_{D_2\cap \Omega_k} |\nabla T_k(u-\tilde u)|^2\right) ^{\frac{p}{2}}\right)
\\[10pt] \qquad \displaystyle
\geq C_3 \left(  \int_{\Omega_k} |\nabla T_k(u-\tilde u)|^2  \right)^{\frac{p}{2}}
\\[10pt] \qquad \displaystyle
\geq C_4 \left(\int_{\Omega_k}  |T_k(u-\tilde u)|^{2^*} \right)^{\frac{p}{2^*}}.
\end{array}
\end{equation}
Going back to \eqref{paso12}, we get
\begin{equation}\label{paso3}
\left(\int_{\Omega_k}  |T_k(u-\tilde u)|^{2^*} \right)^{\frac{p-1}{2^*}} \leq C_5\,\left|\Omega_k   \right|^{1-\frac{q}{r}-\frac{1}{2^*}}\,.
\end{equation}

On the other hand, it is easy to check that $h-k \leq |T_k(s)|$, for $s\geq h \geq k$. Therefore, $h-k \leq |T_k(u-\tilde u)|$, for $x\in \Omega_h$ and $h\geq k$. Hence, we obtain the inequality
\begin{align}\label{guido}
|\Omega_h|(h-k)^{2^*}\leq \int_{\Omega_h}|T_k(u-\tilde u)|^{2^*}\leq \int_{\Omega_k}|T_k(u-\tilde u)|^{2^*}
\end{align}
and combining with \eqref{paso3} we have that
\[
|\Omega_h|\leq \frac{C_6}{(h-k)^{2^{^*}}}|\Omega_k|^{\beta},\qquad \mbox{ for } h>k.
\]
being $\beta=\left(1-\frac{q}{r}-\frac{1}{2^*}\right)\frac{2^*}{p-1}$. Therefore we can apply Stampacchia Lemma \cite{Stamp}, to deduce that
\begin{enumerate}
\item[(i)] if $u-\tilde u \in L^r(\Omega)$ with  $r>\displaystyle \frac{2^*q}{2^*-p}$, then $u-\tilde u \in L^\infty(\Omega)$ and $$\|u-\tilde u\|_{L^\infty(\Omega)}\leq c\,C_6^{1/2^*},$$ for some specific $c>0$,
\item[(ii)] if $u-\tilde u \in L^r(\Omega)$ with $r=\displaystyle \frac{2^*q}{2^*-p}$, then $u-\tilde u \in L^s(\Omega)$ for $s\in [1,\infty)$,
\\
\item[(iii)] if $u-\tilde u \in L^r(\Omega)$ with $r<\displaystyle \frac{2^*q}{2^*-p}$, then $u-\tilde u \in L^s(\Omega)$ for $s=\displaystyle \frac{2^*}{1-\beta}-\rho$ and $\rho>0$ arbitrary small.
\end{enumerate}
Since $u\in L^{2^*}(\Omega)$ we can argue as above for $r=2^*$. Thus, if $\,2^*> \frac{2^*q}{2^*-p}$ we conclude by item (i) that $u-\tilde u \in L^\infty(\Omega)$ and, in virtue of the regularity of $\tilde u$, we get that $\|u\|_{L^\infty(\Omega)}\leq M$. In the case $2^* = \frac{2^*q}{2^*-p}$ we use item (ii) to choose $s> \frac{2^*q}{2^*-p}$  and after repeating the argument we lie under the conditions of item (i) and conclude again the desired bound. Finally, in the case $2^*< \frac{2^*q}{2^*-p}$, by using item (iii) we can take
\[
r_1=\frac{2^*(p-1)}{p-2^*+q}-\rho_1>2^*.
\]
As before, if $r_1\geq \frac{2^*q}{2^*-p}$ we conclude easily. In other cases we take
\[
r_2=\frac{2^*(p-1)r_1}{(p-2^*)r_1+2^*q}-\rho_2.
\]
We claim that arguing by iteration, there exists $k_0\in \NN$ such that $r_k > \frac{2^*q}{2^*-p}$ for $k\geq k_0$, i.e, we can conclude after a  finite number of steps. Indeed, in other cases, we have that the sequence $\{r_k\}$ is bounded and it satisfies the recurrence 
\begin{equation}\label{recurrence}
\left\{
\begin{array}{l}
r_{k+1}=\displaystyle \frac{2^*(p-1)r_k}{(p-2^*)r_k+2^*q}-\rho_{k+1},
\\
r_0=2^*.
\end{array}
\right.
\end{equation}
Where $\rho_{k+1}\to 0$. Moreover, it is easy to check that the sequence is increasing and therefore it is convergent and the limit $r_\infty$ satisfies 
\[
r_\infty=\displaystyle \frac{2^*(p-1)r_\infty}{(p-2^*)r_\infty+2^*q},
\]
namely, $$r_\infty=\displaystyle \frac{2^*(p-1-q)}{p-2^*}<0,$$ which is a contradiction, proving the claim.
Note that here we use the condition $p<2^*$.
\end{proof}

\begin{remark}{\rm{
Note that the hypothesis $p<2^*$ is necessary in order  to apply Stampacchia's idea in the proof of the 
previous lemma.
}}
\end{remark}

\begin{proposition}\label{trick-Stampacchia}
Let $v_\varepsilon$ defined in \eqref{v}. Then $v_\varepsilon \to \tilde u$ in $\mathcal{C}(\overline \Omega)\cap \mathcal{C}^1(F_\delta)$-topology for $\delta>0$ sufficiently small.
\end{proposition}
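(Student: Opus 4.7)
The plan is to upgrade the convergence $v_\varepsilon \to \tilde u$ in $\w$---already known from weak compactness of the constrained minimization---to the stronger topology $\mathcal{C}(\overline\Omega)\cap \mathcal{C}^1(F_\delta)$. The three ingredients I would use are the uniform $L^\infty$-bound of Lemma~\ref{bounded}, the perturbed Euler--Lagrange equation \eqref{eq_v} with Lagrange multiplier $\mu_\varepsilon \leq 0$, and interior regularity for the discontinuous $p(x)$-Laplacian together with linear elliptic regularity up to $\partial\Omega$. The decisive observation is that $\mu_\varepsilon \leq 0$ is an asset rather than an obstacle: since $1-\mu_\varepsilon \geq 1$ and $\tfrac{1}{1-\mu_\varepsilon},\,\tfrac{-\mu_\varepsilon}{1-\mu_\varepsilon}\in[0,1]$, the multiplier can be absorbed into bounded convex combinations with no loss of uniformity.

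First I would work on the strip $F_\delta \subset D_1$. Using $-\Delta\tilde u = \lambda \tilde u^{q}$ in $D_1$, equation \eqref{eq_v} takes the strong form
\[
-\Delta v_\varepsilon \;=\; \frac{\lambda (v_\varepsilon)_+^{q} + (-\mu_\varepsilon)\,\lambda\tilde u^{q}}{1-\mu_\varepsilon}\quad\text{in } F_\delta,
\]
whose right-hand side is uniformly bounded in $L^\infty(F_\delta)$ by Lemma~\ref{bounded} and $\tilde u\in L^\infty(\Omega)$, \emph{regardless of the size of} $\mu_\varepsilon$. Granting (to be justified next) an $\varepsilon$-uniform H\"older bound on $\partial F_\delta \cap \Omega$, $W^{2,r}$-theory for the Laplacian together with the homogeneous Dirichlet condition on $\partial\Omega$ yields a uniform estimate $\|v_\varepsilon\|_{\mathcal{C}^{1,\alpha}(F_\delta)} \leq C$.

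For the interior H\"older bound, I would invoke the locally H\"older regularity for discontinuous $p(x)$-energies of \cite{Fusco,FZ} on the compact set $K=\overline{\Omega \setminus F_{\delta/2}}$. The constrained minimizer $v_\varepsilon$ satisfies the equation obtained from the unperturbed one by adding the term $-\mu_\varepsilon\,|\nabla(v_\varepsilon-\tilde u)|^{p-2}\nabla(v_\varepsilon-\tilde u)$ on $D_2$; since $-\mu_\varepsilon \geq 0$, this perturbation is itself monotone and of the same $p$-type, so the inequality \eqref{class-ineq} ensures that the coercivity and Caccioppoli estimates underlying the Fusco scheme carry through with constants independent of $\varepsilon$. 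This yields $\|v_\varepsilon\|_{\mathcal{C}^\alpha(K)} \leq C$ uniformly in $\varepsilon$, in particular equicontinuity on $\partial F_\delta \cap \Omega$, which closes the loop with the previous step.

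Finally, Arzel\`a--Ascoli applied to the equicontinuous family $\{v_\varepsilon\}$ on $\overline\Omega$ and to the equi-$\mathcal{C}^{1,\alpha}$ family on $F_\delta$ extracts a subsequence converging in $\mathcal{C}(\overline\Omega)\cap\mathcal{C}^1(F_\delta)$; the limit must be $\tilde u$ because $v_\varepsilon \to \tilde u$ in $\w$, hence a.e. in $\Omega$, and by uniqueness of the limit the whole family converges. The step I anticipate to be most delicate is verifying that the Fusco-type H\"older estimate is stable under the $\mu_\varepsilon$-perturbation on $D_2$ with constants independent of $\varepsilon$; this is exactly where the sign of $\mu_\varepsilon$ and the monotonicity inequality \eqref{class-ineq} become essential.
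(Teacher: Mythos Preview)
Your treatment of the strip $F_\delta$ is essentially the paper's argument (your rewriting of the equation on $D_1$ as a convex combination is in fact cleaner than the paper's own statement, which writes $-(1-\mu_\varepsilon)\Delta v_\varepsilon=\lambda v_\varepsilon^{q}$ and silently drops the $\mu_\varepsilon\lambda\tilde u^{q}$ term; either way the right-hand side is uniformly bounded and the $\mathcal C^{1,\alpha}(F_\delta)$ estimate follows). Note, incidentally, that for this step the paper only uses the $L^\infty$ bound of Lemma~\ref{bounded} on $\partial F_{2\delta}\cap\Omega$, not a H\"older bound, and then applies interior/boundary linear regularity on the thinner strip $F_\delta$; the H\"older input you ``grant'' is not actually needed here.

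The substantive divergence, and the gap, is in how you obtain $\mathcal C(\overline\Omega)$-convergence. You propose uniform interior H\"older estimates on $K=\overline{\Omega\setminus F_{\delta/2}}$ via the Fusco/Fan--Zhao theory, and then Arzel\`a--Ascoli. But the results of \cite{Fusco,FZ} concern (quasi-)minimizers of $p(x)$-growth integrands, whereas $v_\varepsilon$ is only a \emph{constrained} minimizer: on $D_2$ it satisfies
\[
-\mathrm{div}\big(|\nabla v_\varepsilon|^{p-2}\nabla v_\varepsilon\big)\;+\;(-\mu_\varepsilon)\,\big(-\mathrm{div}\big(|\nabla(v_\varepsilon-\tilde u)|^{p-2}\nabla(v_\varepsilon-\tilde u)\big)\big)\;=\;g(v_\varepsilon),
\]
which is \emph{not} of the form those references cover. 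The perturbation is a second $p$-Laplacian shifted by the fixed field $\nabla\tilde u$, not a lower-order term, and there is no a priori bound on $|\mu_\varepsilon|$. Monotonicity and \eqref{class-ineq} do give coercivity, but they do not by themselves yield the higher-integrability/Caccioppoli machinery of \cite{Fusco,FZ} with constants independent of $\mu_\varepsilon$ across the interface $\Gamma$. This is precisely the step you flag as ``most delicate'', and as written it is not justified.

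The paper avoids interior regularity for $v_\varepsilon$ altogether. For the $\mathcal C(\overline\Omega)$ part it revisits the Stampacchia computation of Lemma~\ref{bounded} with one extra twist: in the H\"older step it splits the exponent $\tfrac{1}{2^*}$ as $\tau+(\tfrac{1}{2^*}-\tau)$ and pulls out the factor
\[
\theta(\varepsilon)=\Big(\int_\Omega |v_\varepsilon-\tilde u|^{2^*}\Big)^{\!\tau},
\]
which tends to $0$ because $v_\varepsilon\to\tilde u$ in $\w$. Running the level-set iteration with $r=r_\kappa$ large enough so that the resulting exponent $\hat\beta>1$, Stampacchia's lemma gives directly
\[
\|v_\varepsilon-\tilde u\|_{L^\infty(\Omega)}\le c\,\theta(\varepsilon)^{c'}\longrightarrow 0.
\]
This quantitative $L^\infty$ decay uses only $\mu_\varepsilon\le 0$ (to drop a term with the right sign) and the uniform $L^\infty$ bound already established; no regularity theory for the perturbed operator on $D_2$ is needed. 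If you want to salvage your route, you would have to prove an $\varepsilon$-uniform H\"older estimate for the coupled system above, which is a separate (and nontrivial) regularity result; the paper's exponent-splitting trick is both shorter and self-contained.
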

\begin{proof}
Due to the construction of $F_\delta$ in \eqref{franja},  we have that $v_\varepsilon$ satisfies 
\begin{equation*}
\left\{
\begin{array}{lc}
-(1-\mu_\varepsilon)\Delta v_\varepsilon=\lambda v_\varepsilon^{\,q}, & \hbox{ in }F_{2\delta},
\\
v_\varepsilon =0, & \hbox{ on } \partial \Omega.
\end{array}
\right.
\end{equation*}
Moreover, by using Lemma \ref{bounded} it follows that $v_\varepsilon$ is bounded on $\partial F_{2\delta} \cap \Omega$. Then by interior regularity, one may bootstrap the bound $\|v_\varepsilon\|_{W^{1,2}(F_{\delta})}\leq M$ to arrive to $\|v_\varepsilon \|_{\mathcal{C}^{1,\alpha}(F_\delta)}\leq M$ independent of $\varepsilon$. Thus, since $v_\varepsilon \to \tilde u$ in $\w$ it follows by Arzelà-Ascoli  that $v_\varepsilon \to \tilde u$ in $\mathcal{C}^1(F_\delta)$. This concludes the first part of the proof.

  In order to prove that $v_\varepsilon \to \tilde u$ uniformly in $\mathcal{C}(\overline \Omega)$ we adapt part of the method of Stampacchia used in the proof of Lemma \ref{bounded} to get an estimate. Concrentely, let $\kappa \in \NN$ such that $r_{\kappa}$, the $\kappa$-term of the sequence \eqref{recurrence}, satisfies $r_\kappa > \frac{2^*q}{2^*-p}$. We adapted \eqref{paso2} replacing by $r_\kappa$ in the following form
$$
\begin{array}{l}
\displaystyle  
\int_{\Omega}(g(v_\varepsilon)-g(\tilde u)) T_k(v_\varepsilon-\tilde u)\leq \lambda \int_{\Omega_k}(|v_\varepsilon|^q+|\tilde u|^q)|T_k(v_\varepsilon-\tilde u)|
\\[10pt] \qquad \displaystyle
\leq \lambda \left( \int_{\Omega_k}(|v_\varepsilon|^q+|\tilde u|^q)^{\frac{r_\kappa}{q}}\right)^{\frac{q}{r_\kappa}} \left(  \int_{\Omega_k}|T_k(v_\varepsilon-\tilde u)|^{2^*}   \right)^{\frac{1}{2^*}} \left|\Omega_k   \right|^{1-\frac{q}{r_\kappa}-\frac{1}{2^*}}
\\[10pt] \qquad \displaystyle
\leq C \left(  \int_{\Omega_k}|T_k(v_\varepsilon-\tilde u)|^{2^*}   \right)^{\frac{1}{2^*}} \left|\Omega_k   \right|^{1-\frac{q}{r_\kappa}-\frac{1}{2^*}},
\end{array}
$$  
here $C=C(\lambda, q, \kappa, N, \|\tilde u\|_{L^{r_\kappa}(\Omega)})$. 
Let us consider $0<\tau <1/2^*$ sufficiently small, that we will specify later, and we write the last expression as follows
$$
\begin{array}{l}
\displaystyle
\int_{\Omega}(g(v_\varepsilon)-g(\tilde u)) T_k(v_\varepsilon-\tilde u)
\\[10pt] \qquad \displaystyle
\leq  C \left(  \int_{\Omega_k}|T_k(v_\varepsilon-\tilde u)|^{2^*}   \right)^\tau \left(  \int_{\Omega_k}|T_k(v_\varepsilon-\tilde u)|^{2^*}   \right)^{\frac{1}{2^*}-\tau} \left|\Omega_k   \right|^{1-\frac{q}{r_\kappa}-\frac{1}{2^*}}
\\[10pt] \qquad \displaystyle
\leq C \left(  \int_{\Omega}|v_\varepsilon-\tilde u|^{2^*}   \right)^\tau \left(  \int_{\Omega_k}|T_k(v_\varepsilon-\tilde u)|^{2^*}   \right)^{\frac{1}{2^*}-\tau} \left|\Omega_k   \right|^{1-\frac{q}{r_\kappa}-\frac{1}{2^*}}.
\end{array}
$$
Therefore, using this inequality in \eqref{paso1} and having in mind \eqref{lado-derecho}, it holds that
\[
\left( \int_{\Omega_k}|T_k(v_\varepsilon -\tilde u|^{2^*}     \right)^{\frac{p-1}{2^*}+\tau}\leq C\, \theta(\varepsilon)\, \left|\Omega_k   \right|^{1-\frac{q}{r_\kappa}-\frac{1}{2^*}},
\]
here $\theta(\varepsilon)=\left(  \int_{\Omega}|v_\varepsilon-\tilde u|^{2^*}   \right)^\tau$ (note that $\theta(\varepsilon) \to 0$ since $v_\varepsilon \to \tilde u$ in $\w$ ). Thus, by using inequality \eqref{guido}, we get
\[
|\Omega_h|\leq \frac{\tilde C \hat \theta(\varepsilon)}{(h-k)^{2^{^*}}}|\Omega_k|^{\hat\beta},\qquad h>k.
\]
Where $\hat \theta (\varepsilon)=\theta(\varepsilon)^{\frac{2^*}{p-1+\tau 2^*}}$ and
\[
\hat \beta =\frac{1-\frac{q}{r_\kappa}-\frac{1}{2^*}}{\frac{p-1}{2^*}+\tau}.
\]
Then, choosing $\tau$ such that $\hat \beta >1$ (note that it is possible due to the choice of $r_\kappa$) it  is straightforward by item (i) from Stampacchia Lemma that
\[
\|v_\varepsilon - \tilde u\|_{L^\infty(\Omega)}\leq c\, \hat \theta(\varepsilon)^{\frac{1}{2^*}}\to 0, \quad \hbox{ as } \varepsilon \to 0,
\]
which completes the proof.
\end{proof}

Summarizing, we have proved the following result:
\begin{theorem}\label{minimum}
For every $\lambda \in (0,\lambda^*)$, there exists, $\tilde u_\lambda$, a positive local minimum of $G_\lambda$ in $\w$.
\end{theorem}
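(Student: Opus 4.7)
The plan is to package the three preceding results (Lemma \ref{topology}, Lemma \ref{bounded} and Proposition \ref{trick-Stampacchia}) into a contradiction argument. Fix $\lambda\in(0,\lambda^*)$ and let $\tilde u=\tilde u_\lambda$ be the function produced in Lemma \ref{topology}, that is, a positive local minimizer of $G_\lambda$ in the $\mathcal{C}(\overline\Omega)\cap\mathcal{C}^1(F_\delta)$-topology, satisfying $0<u_1<\tilde u<u_2$ in $\Omega$ and $\partial_\nu u_2<\partial_\nu\tilde u<\partial_\nu u_1<0$ on $\partial\Omega$. I claim that the very same $\tilde u$ is a local minimum of $G_\lambda$ in $\mathcal{W}(\Omega)$, which is exactly what the theorem asserts.

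Argue by contradiction: suppose $\tilde u$ is not a local minimum of $G_\lambda$ in $\mathcal{W}(\Omega)$. Then, for every sufficiently small $\varepsilon>0$, the alternative \eqref{v} holds, so that the minimizer
\[
v_\varepsilon\in V_\varepsilon(\tilde u), \qquad G(v_\varepsilon)=\min\{G(u):u\in V_\varepsilon(\tilde u)\}<G(\tilde u),
\]
is well defined (existence follows from weak lower semicontinuity of $G_\lambda$ and from the fact that $V_\varepsilon(\tilde u)$ is convex, closed and bounded, hence weakly compact, in the reflexive Banach space $\mathcal{W}(\Omega)$). By the very definition of $V_\varepsilon(\tilde u)$, the sequence $v_\varepsilon$ tends to $\tilde u$ in the norm of $\mathcal{W}(\Omega)$ as $\varepsilon\to 0$.

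At this stage the main work has already been done. Lemma \ref{bounded} guarantees that $\{v_\varepsilon\}$ is uniformly bounded in $L^\infty(\Omega)$, which is precisely the ingredient (requiring $p<2^*$) used in Proposition \ref{trick-Stampacchia} to upgrade the convergence $v_\varepsilon\to\tilde u$ from the weak/strong $\mathcal{W}(\Omega)$ sense to the much stronger $\mathcal{C}(\overline\Omega)\cap\mathcal{C}^1(F_\delta)$ topology. But then, for all sufficiently small $\varepsilon$, $v_\varepsilon$ lies in an arbitrarily small $\mathcal{C}(\overline\Omega)\cap\mathcal{C}^1(F_\delta)$-neighborhood of $\tilde u$, so Lemma \ref{topology} forces
\[
G(v_\varepsilon)\geq G(\tilde u),
\]
contradicting the strict inequality $G(v_\varepsilon)<G(\tilde u)$ coming from \eqref{v}. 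Therefore $\tilde u=\tilde u_\lambda$ is a positive local minimum of $G_\lambda$ in $\mathcal{W}(\Omega)$, proving the theorem.

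The only conceptually nontrivial step in this whole argument is the topology-strengthening in Proposition \ref{trick-Stampacchia}; the present theorem simply records its consequence. No obstacle remains once that proposition is at hand, since positivity of $\tilde u$ is guaranteed by $\tilde u>u_1>0$ and the reflexivity/weak compactness used to define $v_\varepsilon$ are by now routine.
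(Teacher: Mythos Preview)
Your proposal is correct and follows exactly the paper's approach: the paper does not give a separate proof of Theorem \ref{minimum} but simply writes ``Summarizing, we have proved the following result'' after Proposition \ref{trick-Stampacchia}, the implicit argument being precisely the contradiction you spell out (assume \eqref{v}, use Lemma \ref{bounded} and Proposition \ref{trick-Stampacchia} to get $v_\varepsilon\to\tilde u$ in $\mathcal{C}(\overline\Omega)\cap\mathcal{C}^1(F_\delta)$, and contradict Lemma \ref{topology}). Your write-up is in fact more explicit than the paper's about how the pieces assemble.
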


The last goal is to obtain a second positive solution of problem \eqref{alexis}. Taking into account \eqref{MP-geom}, one may expect that  $G_\lambda$ possesses a mountain-pass geometry and, by using
results by Ghoussoub-Preiss (\cite{G-P}) and Jeanjean (\cite{Jeanjean}) in the spirit of the celebrated Mountain Pass theorem due to Ambrosetti and Rabinowitz (\cite{AR}), to find a critical point different from the minimum. To make sure that this critical point is nontrivial we consider, for every fixed $\lambda\in (0,\lambda^*)$, the  truncated functional $\widehat{G}_\lambda:\w \to \RR$ as follows:
\begin{equation}\label{hatG}
\widehat{G}_\lambda(u)=\int_{D_1}\frac{|\nabla u(x)|^2}{2}+\int_{D_2}\frac{|\nabla u(x)|^p}{p}-\lambda \int_{\Omega}\widehat{H}(x,u),
\end{equation}
as usual $\widehat{H}(x,s)=\int_0^s \widehat h(x,t)dt$, being in this case
 \begin{equation*}
  \widehat{h}(x,t)=
  \left\{
  \begin{array}{lc}
  t^q\,, & t>u_1(x) , \\[5pt]
  u_1^q(x)\,, & t\leq u_1(x),
  \end{array}
  \right.
  \end{equation*}
and  by $0<u_1$ we denote the minimal solution for a fixed $\lambda_1 \in (0,\lambda)$ which is obtained  in Theorem \ref{Teo1}. We point out that, $\widehat u_\lambda$, critical point of $\G$ corresponds to a supersolution of problem \eqref{for-f} with $f=\lambda_1u_1^q$. Hence, by Proposition \ref{CP-f}, it follows that $\widehat u_\lambda \geq u_1$. Moreover, if $\lambda > \lambda_1$ we obtain $\widehat u_\lambda > u_1$ and then it is also a critical point of $G_\lambda$.

In order to use the Mountain Pass theorem, as usual, a preliminary step is 
to show the existence of a bounded Palais-Smale sequence at the mountain pass 
level and then prove that it posses a convergent subsequence. We recall that a Palais-Smale
sequence for the functional $\G$ at level $c(\lambda)\in \RR$ is a sequence $\{u_n\}\subset \w$ verifying $\lim_n\G(u_n)=c(\lambda) $ and $\lim_n\G^{\,\prime}(u_n)=0$ in $\w^{\,\prime}$. We start by showing  
that bounded Palais-Smale sequences have a subsequence converging strongly in $\w$.
Note that we have to assume that the sequence is bounded, since it is not clear how to obtain boundedness in $\w$
using only that $\lim_n\G(u_n)=c(\lambda) $ and $\lim_n\G^{\,\prime}(u_n)=0$. 
This difficulty (showing that Palais-Smale sequences are bounded) forces us to use 
Jeanjean's ideas (\cite{Jeanjean}) and hence obtain existence of a second solution for almost every
$\lambda \in (0, \lambda^*)$.

\begin{lemma}\label{bounded-PS}
Let $\{u_n\}\subset \w$ be a sequence satisfying
\begin{enumerate}
    \item[(i)] \label{1} $\{u_n\}$ bounded in  $\w$, 
    
    \
    
    \item[(ii)] \label{2} $\G(u_n)$ bounded,
    
    \
    
    \item[(iii)] \label{3} $\G^{\, \prime}(u_n)\to 0$ in $\mathcal{W}^{\, \prime}(\Omega)$.
\end{enumerate}
Then, $\{u_n \}$ has a convergent subsequence in $\w$.
\end{lemma}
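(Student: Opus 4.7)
The strategy is the standard one for quasilinear Palais--Smale sequences: pass to a weak limit in $\w$, test the approximate Euler--Lagrange identity against the difference $u_n - u$, dispose of the lower--order term by compactness, and conclude via the monotonicity inequality \eqref{class-ineq}. First, by hypothesis (i) and the reflexivity of $\w$ (Lemma \ref{space}), I extract a subsequence, still denoted $\{u_n\}$, such that $u_n \rightharpoonup u$ weakly in $\w$; hence $u_n \rightharpoonup u$ also weakly in $W_0^{1,2}(\Omega)$ and in $W^{1,p}(D_2)$. Rellich--Kondrachov then gives $u_n \to u$ almost everywhere in $\Omega$ and strongly in $L^r(\Omega)$ for every $r \in [1, 2^*)$; in particular, since $q+1 < p < 2^*$, $u_n \to u$ in $L^{q+1}(\Omega)$.

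Next, since $\{u_n - u\}$ is bounded in $\w$, hypothesis (iii) implies $\langle \G^{\,\prime}(u_n), u_n - u\rangle \to 0$, that is,
\[
\int_{D_1}\nabla u_n\,\nabla(u_n-u) + \int_{D_2}|\nabla u_n|^{p-2}\nabla u_n\,\nabla(u_n-u) = \lambda\int_{\Omega}\widehat{h}(x,u_n)(u_n-u) + o(1).
\]
The truncation $\widehat{h}$ satisfies $|\widehat{h}(x,s)|\leq |s|^{q} + \|u_1\|_{L^\infty(\Omega)}^{q}$, so H\"older's inequality together with the strong $L^{q+1}$--convergence of $\{u_n\}$ forces the right--hand side to tend to zero. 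On the other hand, the weak convergence in $W^{1,2}(D_1)$ yields $\int_{D_1}\nabla u\,\nabla(u_n-u)\to 0$, and since $|\nabla u|^{p-2}\nabla u \in L^{p/(p-1)}(D_2)$, the weak convergence in $W^{1,p}(D_2)$ yields $\int_{D_2}|\nabla u|^{p-2}\nabla u\,\nabla(u_n-u)\to 0$.

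Subtracting these two null sequences from the displayed identity, I obtain
\[
\int_{D_1}|\nabla(u_n-u)|^2 + \int_{D_2}\bigl(|\nabla u_n|^{p-2}\nabla u_n - |\nabla u|^{p-2}\nabla u\bigr)\,\nabla(u_n-u) \;\longrightarrow\; 0.
\]
Both summands are nonnegative; by the classical inequality \eqref{class-ineq} applied with $r=p$, the second one dominates $c(p)\int_{D_2}|\nabla(u_n-u)|^{p}$. Therefore $\|\nabla(u_n-u)\|_{L^2(D_1)}\to 0$ and $\|\nabla(u_n-u)\|_{L^p(D_2)}\to 0$, i.e.\ $u_n\to u$ in the norm $[\,\cdot\,]_{\w}$ of Lemma \ref{space}, which is equivalent to the $\w$--norm. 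The only slightly delicate point is the compactness of the nonlinear term, which rests crucially on the subcritical range $p<2^*$; hypothesis (ii) plays no role here and is used only later to locate the mountain--pass level.
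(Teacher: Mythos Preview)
Your proof is correct, and it is in fact more direct than the paper's argument. Both proofs begin identically: extract a weak limit $u$ in $\w$, use Rellich--Kondrachov to upgrade to strong $L^{q+1}$--convergence (since $q+1<p<2^*$), test $\G'(u_n)$ against $u_n-u$, and kill the nonlinear term by compactness, arriving at
\[
\int_{D_1}\nabla u_{n}\,\nabla (u_{n}-u)+\int_{D_2}|\nabla u_{n}|^{p-2}\nabla u_{n}\,\nabla (u_{n}-u)\to 0.
\]
From this point the paper takes a detour: it introduces the convex functional $S(v)=\tfrac12\|\nabla v\|_{L^2(D_1)}^2+\tfrac1p\|\nabla v\|_{L^p(D_2)}^p$, shows $S(u_n)\to S(u)$ by combining weak lower semicontinuity with the convexity inequality $S(u)\ge S(u_n)+S'(u_n)(u-u_n)$, and then closes the argument by contradiction via Clarkson's inequality (a uniform--convexity argument). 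You instead subtract off the two linear-in-$\nabla(u_n-u)$ terms that vanish by weak convergence, landing directly on a sum of two nonnegative quantities, and invoke the monotonicity estimate \eqref{class-ineq} to conclude $\|\nabla(u_n-u)\|_{L^2(D_1)}+\|\nabla(u_n-u)\|_{L^p(D_2)}\to 0$. Your route is shorter, uses only tools already introduced earlier in the paper, and avoids Clarkson's inequality entirely; the paper's route, on the other hand, generalises more readily to situations where a pointwise monotonicity inequality like \eqref{class-ineq} is unavailable but uniform convexity still holds. Your closing remark that hypothesis (ii) is not needed is also correct: boundedness of $\G(u_n)$ is already a consequence of (i).
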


 \begin{proof}
{\emph{(i)}} there exists a subsequence $\{u_{n_k}\}$ and $u \in \mathcal{W}(\Omega)$, such that $u_{n_k} \rightharpoonup u$ in $\mathcal{W}(\Omega)$ and, by the embedding $\mathcal{W}(\Omega)\subset W_0^{1,2}(\Omega) \subset L^{r}(\Omega), \, \forall r\in[1,2^*) $, it holds $u_{n_k} \to u$ strongly in $L^{r}(\Omega) $.

  Let now $\varepsilon_{n_k}=\|\G^{\, \prime}(u_{n_k})\|_{\mathcal{W}^{\, \prime}(\Omega)}$. By {\emph{(iii)}} it holds $\varepsilon_{n_k} \to 0$. Furthermore
\begin{equation}\label{diff}
\left| \G^{\, \prime}(u_{n_k})(v)  \right|\leq \varepsilon_{n_k}[v]_{\w}, \qquad \forall v\in \mathcal{W}(\Omega), \, k\in \NN.
\end{equation}
  Choosing  $v=u_{n_k}-u$ in \eqref{diff} and taking into account that
  $$
  \int_{\Omega}\widehat{H}(x,u_{n_k}(x))(u_{n_k}-u)(x)\to 0
  $$
  (because $u_{n_k} \to u$ strongly in $L^{q+1}(\Omega) $, since $q+1<2^*$), we have from \eqref{diff} the following inequality
  \begin{equation*}\label{cond123}
  \int_{D_1}\nabla u_{n_k}\nabla (u_{n_k}-u)+\int_{D_2}|\nabla u_{n_k}|^{p-2}\nabla u_{n_k} \nabla (u_{n_k}-u)\leq \varepsilon_{n_k}[u_{n_k}-u]_{\w}.
  \end{equation*}
  And, since $\{u_n\}$ is bounded in norm $[\, \cdot \,]_{\w}$, it follows that
  \begin{equation}\label{cond456}
  \int_{D_1}\nabla u_{n_k}\nabla (u_{n_k}-u)+\int_{D_2}|\nabla u_{n_k}|^{p-2}\nabla u_{n_k} \nabla (u_{n_k}-u)\to 0, \quad k\to \infty.
    \end{equation}
  Let's show that \eqref{cond456} implies the existence of a subsequence of  $\{u_{n_k}\}$ which converges strongly in  $\mathcal{W}(\Omega)$.

  We set the operator $S:\mathcal{W}(\Omega) \to [0,\infty)$ as 
  $$S(v)=\frac{1}{2}\|\nabla v\|_{L^2(D_1)}^2+ \frac{1}{p}\|\nabla v\|_{L^p(D_2)}^p,$$
  namely,
  $$
  S(v)=\G(v)+\lambda \int_{\Omega}\widehat{H}(x,u).
  $$
  It is easy to check that $S$ is convex and weakly lower semicontinuous.
  First, we claim that 
  \begin{equation}\label{cond789}
  \lim_{k\to \infty}S(u_{n_k})=S(u).
  \end{equation}

  Indeed, by {\emph{(ii)}} and by the strong convergence of $\{u_{n_k}\}$ in $L^{q+1}(\Omega)$, we get that the sequence $\{S(u_{n_k})\}$ is bounded. Thus, up to a subsequence, $S(u_{n_k})\to a\in \RR$. Moreover, since $S$ is weakly lower semicontinuous, we obtain
  $$
  a=\lim_{k\to \infty} \inf S(u_{n_k})\geq S(u).
  $$
  By the other hand, due to convexity of $S$, i.e.
  $$
  S(u)\geq S(u_{n_k})+S^{\, \prime}(u_{n_k})(u-u_{u_k})
  $$
  and keeping in mind, by \eqref{cond456}, that  $S^{\, \prime}(u_{n_k})(u-u_{u_k}) \to 0$, we obtain (taking limits)
  $$
  S(u)\geq a
  $$
  and the claim \eqref{cond789} is proved.

    Then, to show that there exists a subsequence of $\{u_{n_k}\}$ which converges  strongly to $u$ in $\mathcal{W}(\Omega)$, we argue by contradiction.  We consider a subsequence $\{u_{n_{k_l}}\}$ and $\delta >0$ such that $[u_{n_{k_l}}-u]_{\w}\geq \delta$. In particular, there is a $\tilde \delta>0$ such that $S(u_{n_{k_l}}-u)\geq \tilde \delta$.
    
We have  $$\displaystyle \frac{u_{n_{k_l}}+u}{2}\rightharpoonup u$$ and, by using again that $S$ is weakly lower semicontinuous, it holds
    \begin{equation}\label{parteA}
    S(u)\leq \lim \inf S\left(\displaystyle \frac{u_{n_{k_l}}+u}{2}  \right).
    \end{equation}
    On the other hand, due to Clarkson's inequality:
    $$
    \left|\frac{z+w}{2}  \right|^r+\left|\frac{z-w}{2}  \right|^r\leq \frac{1}{2} |z|^r+\frac{1}{2}|w|^r,\quad z,w\in \RR,\, 2\leq r <\infty.
    $$
    it is easy to check that
    \begin{align*}
    S\left(\displaystyle \frac{u_{n_{k_l}}+u}{2}  \right) &\leq \frac{1}{2}S(u_{n_{k_l}})+\frac{1}{2}S(u)-   S\left(\displaystyle \frac{u_{n_{k_l}}-u}{2}  \right)
    \\
    &\leq \ \frac{1}{2}S(u_{n_{k_l}})+\frac{1}{2}S(u)-\frac{\tilde \delta}{2^p}.
    \end{align*}
    Finally, taking superior limits and taking into account \eqref{cond789}, we have 
    $$
    \lim \sup S\left(\displaystyle \frac{u_{n_{k_l}}+u}{2}  \right)\leq S(u)-\frac{\tilde \delta}{2^p}
    $$
    which, together with \eqref{parteA},  leads to the following contradiction
    $$
    S(u)\leq \lim \inf S\left(\displaystyle \frac{u_{n_{k_l}}+u}{2}  \right)\leq  \lim \sup S\left(\displaystyle \frac{u_{n_{k_l}}+u}{2}  \right)\leq S(u)-\frac{\tilde \delta}{2^p}.
    $$
 \end{proof} 

Now we are ready to find a second solution.

\begin{proof}[Proof of Theorem \ref{Teo2}]
For every fixed $\lambda \in (0,\lambda^*)$, we consider
\[
\Gamma(\lambda):=\{\gamma \in \mathcal{C}([0,1],\w)\, : \, \gamma(0)=\tilde u_{\lambda},\, \gamma(1)=Tw   \}.
\]
Here $\tilde u_{\lambda}$ is the local minimum of the functional $G_{\lambda}$ obtained in Theorem \ref{minimum}. In addition, by construction,  $\tilde u_\lambda$ is greater that $u_1$, the minimal positive solution for $0<\lambda_1<\lambda$ obtained in Theorem \ref{Teo1}. Therefore, $\tilde u_\lambda$ is also a local minimum from $\G$. On the other hand, $0< w\in \mathcal{C}^\infty_c(D_1)$ and $T=T(\lambda)>0$ big enough  to ensure that $T w>u_1$ in $D_1$ and $\G(\tilde u_\lambda)>\G (Tw)$.

  Let's also consider
  \begin{equation*}
c(\lambda):=\inf_{\gamma \in \Gamma(\lambda)} \max_{t\in [0,1]} \G(\gamma(t)).
\end{equation*}
Obviously, $c(\lambda)\geq \max \{\G(\tilde u_\lambda), \G(Tw)   \}= \G(\tilde u_\lambda)=G_\lambda(\tilde u_\lambda)$. Where in the last equality we have used the fact that $u_1<\tilde u_\lambda$.

We distinguish between two possible cases:

\underline{If $c(\lambda)=  \G(\tilde u_\lambda)$}. In this case,  since $\tilde u_\lambda$ is a local minimizer of $\G$, there is $\delta >0$ such that $\G (\tilde u_\lambda)\leq \G(v)$ for all $v$ belongs in the ball  $B_{\delta}(\tilde u_\lambda)=\{v \in \mathcal{W}(\Omega)\,:\, [v-\tilde u_\lambda]_{\w}<\delta \}$. In the case that there is a $v_0\in B_{\delta}(\tilde u_\lambda)\setminus \{\tilde u_\lambda\}$  with $\G (\tilde u_\lambda) = \G(v_0)$, then $v_0$ will be another minimum (in fact, there will be infinity many minimums) and the proof is finished. Therefore, we can suppose 
\[
\G (\tilde u_\lambda)<\G(v), \qquad \forall v\in B_{\delta}(\tilde u_\lambda)\setminus \{\tilde u_\lambda\}.
\]
In particular, for all $r\in (0,\delta)$, it holds
\[
c(\lambda)=\G (\tilde u_\lambda)< \G(v), \qquad \hbox{ if } [\tilde u_\lambda-v]_{\w}=r.
\]
Then, applying  the refinement of the Mountain Pass Theorem dues to Ghoussoub-Preiss \cite[Theorem 1]{G-P} with the closed subset $$F_r=\{v \in \mathcal{W}(\Omega)\,:\, [v-\tilde u_\lambda]_{\w}=r\}\subset \w$$ we obtain the existence of a sequence $\{u_n\}\subset \w$ verifying: $$\lim_n \hbox{dist}(u_n,F_r)=0, \qquad
\lim_n \G(u_n)=c(\lambda)\qquad \mbox{and} \qquad \lim_n \|\G^{\, \prime}(u_n)\|_{\w'}=0.$$ Then, $\{u_n\}$ is bounded (because $F_r$ is bounded and the distance of $u_n$ to $F_r$ goes to zero) and by Lemma \ref{bounded-PS} our functional satisfies the Palais-Smale condition for bounded sequences. Consequently, there exists a critical point of $\G$ on $F_r$ with critical value $c(\lambda)$ (see \cite[Theorem 1. bis]{G-P}). Then, this critical point is a nontrivial weak solution to our problem \eqref{alexis}  (that is in fact strictly greater than $u_1$).
Note that we can apply this reasoning for every closed subset $F_r$ with $r\in (0,\delta)$, and to conclude the existence of infinite critical points of $G_\lambda$  in $B_{\delta}(\tilde u_\lambda)$.

\medskip

\underline{If $c(\lambda)>  \G(\tilde u_\lambda)$}, for some $\lambda=\hlambda \in (0,\lambda^*)$. Let $\lambda_1<\hlambda$ and $u_1$ the  minimal solution in the construction of $\widehat{G}_\hlambda$ in \eqref{hatG}.
In this way, we consider the interval $[\hlambda-\varepsilon_0,\hlambda]$,
with $\varepsilon_0>0$ such that
\begin{equation*}
\varepsilon_0<\min \left\{\frac{(q+1)\varepsilon_1}{\|\tilde u_\hlambda\|_{L^{q+1}(\Omega}^{q+1}}, \hlambda-\lambda_1   \right\},
\end{equation*}
where $\varepsilon_1=c(\hlambda)-\widehat{G}_\hlambda(\tilde u_\hlambda)>0$.
Obviously, $[\hlambda-\varepsilon_0,\hlambda]\subset (0,\lambda^*)$ since $\varepsilon_0<\hlambda$. Then, for this $(u_1,\lambda_1)$ fixed, we define $\G$ for $\lambda \in [\hlambda-\varepsilon_0,\hlambda]$. Of course, $\G$  is non-increasing with respect to $\lambda$. Furthermore, we get for every $\lambda \in [\hlambda-\varepsilon_0,\hlambda]$ :
\begin{align*}
c(\lambda)\geq c(\hlambda) &=\widehat{G}_\hlambda(\tilde u_\hlambda)+\varepsilon_1
\\
&=\widehat{G}_{\hlambda-\varepsilon_0}(\tilde u_\hlambda)+\varepsilon_1-\frac{\varepsilon_0}{q+1}\int_\Omega \tilde u_\hlambda^{q+1}
\\
&>\widehat{G}_{\hlambda-\varepsilon_0}(\tilde u_\hlambda)
\\
&\geq \widehat{G}_{\lambda}(\tilde u_\hlambda),
\end{align*}
where we have used the fact that $\G(\tilde u_\hlambda)=G_\lambda (\tilde u_\hlambda)$ for $\lambda \in [\hlambda-\varepsilon_0,\hlambda]$.

  Summarizing, we have
\begin{equation*}
c(\lambda)>\max \{\widehat{G}_{\lambda}(\tilde u_\hlambda), \widehat{G}_{\lambda}(Tw)\}, \qquad \hbox{ for all } \lambda \in [\hlambda-\varepsilon_0,\hlambda].
\end{equation*}

Finally, applying Jeanjean's result \cite[Theorem 1.1]{Jeanjean}, there exists a bounded Palais-Smale 
sequence at the level $c(\lambda)$ for almost every $\lambda \in [\hlambda-\varepsilon_0,\hlambda]$. This 
Palais-Smale sequence, due Lemma \ref{bounded-PS}, has a subsequence that converges strongly. In this setting, 
by the Mountain Pass theorem due to Ambrosetti and Rabinowitz (\cite{AR}) there exists a critical point of $\widehat{G}_\lambda$ at level $c(\lambda)$ (hence 
different from the minimum $\tilde u_\lambda$) for almost every $\lambda \in [\hlambda-\varepsilon_0,\hlambda]$. Arguing as in the previous case, we obtain a  positive critical point of $G_\lambda$. 

Then, we conclude that there exists  a second positive solution of problem \eqref{alexis} for almost every $\lambda \in (0,\lambda^*)$.
\end{proof}

\section*{Appendix}

We include here a proof of the fact that Palais-Smale sequences are bounded when we assume an
Ambrosetti-Rabinowitz type condition with $\kappa>p$. We remark again that this condition
does not hold here, but we include this simple computation for the sake of completeness.

\begin{lemma}\label{AR-type}
  Consider the functional $F:\w \to \RR$ defined as follows:
\begin{equation*}
F(u)= \int_{D_1} \frac{|\nabla u|^2}{2} \, dx +
\int_{D_2} \frac{|\nabla u|^p}{p} \, dx - \lambda \int_{\Omega} H(x,u(x)) \, dx,
\end{equation*}
with $H$ such that there exists $\kappa>p$ satisfying 
\begin{equation}\label{condition-AR}
0\leq \kappa H(x,s)\leq s h(x,s), \qquad s \geq 0, \, x \in \Omega,
\end{equation}
where $H(x,s)=\int_0^sh(x,t)dt$.

   Then,  Palais-Smale sequences for $F$ are bounded. 
\end{lemma}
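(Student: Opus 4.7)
The plan is to follow the classical Ambrosetti--Rabinowitz boundedness argument, adapted to the fact that our energy involves two different powers on two regions. Let $\{u_n\} \subset \w$ be a Palais--Smale sequence, i.e.\ $F(u_n) \to c$ and $\|F'(u_n)\|_{\w'} \to 0$. The key computation is to form the combination $\kappa F(u_n) - F'(u_n)(u_n)$. Writing it out,
\begin{equation*}
\kappa F(u_n) - F'(u_n)(u_n) = \Bigl(\tfrac{\kappa}{2}-1\Bigr)\!\int_{D_1}|\nabla u_n|^2 + \Bigl(\tfrac{\kappa}{p}-1\Bigr)\!\int_{D_2}|\nabla u_n|^p + \lambda\!\int_\Omega \bigl(h(x,u_n)u_n - \kappa H(x,u_n)\bigr).
\end{equation*}
The hypothesis $\kappa > p > 2$ makes both coefficients on the gradient terms strictly positive, and the AR--type condition \eqref{condition-AR} forces the integrand $h(x,u_n)u_n - \kappa H(x,u_n)$ to be nonnegative (on $\{u_n\geq 0\}$, with the usual convention that $H(x,s)$ is extended so that the functional is even defined; if needed, one argues by splitting $u_n = u_n^+ - u_n^-$ and handling the negative part with the linear or $p$--Laplacian bound directly). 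This yields
\begin{equation*}
C\!\left(\int_{D_1}|\nabla u_n|^2 + \int_{D_2}|\nabla u_n|^p\right) \leq \kappa F(u_n) - F'(u_n)(u_n),
\end{equation*}
with $C = \min\{\kappa/2 - 1,\ \kappa/p - 1\} > 0$.

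Next I would control the right-hand side using the Palais--Smale information: $F(u_n) = c + o(1)$ and $|F'(u_n)(u_n)| \leq \|F'(u_n)\|_{\w'}\,[u_n]_\w = o(1)\,[u_n]_\w$. This gives the fundamental inequality
\begin{equation*}
\|\nabla u_n\|_{L^2(D_1)}^2 + \|\nabla u_n\|_{L^p(D_2)}^p \leq C_1 + \varepsilon_n\,[u_n]_\w,
\end{equation*}
with $\varepsilon_n \to 0$ and $C_1$ a constant depending on $c$ and $\kappa$.

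The final step, and the only non-algebraic point to check, is to deduce boundedness of $[u_n]_\w = \|\nabla u_n\|_{L^2(D_1)} + \|\nabla u_n\|_{L^p(D_2)}$ from this inequality. Setting $a_n := \|\nabla u_n\|_{L^2(D_1)}$ and $b_n := \|\nabla u_n\|_{L^p(D_2)}$, we have $a_n^2 + b_n^p \leq C_1 + \varepsilon_n (a_n + b_n)$. Since $p>2>1$, the left-hand side grows superlinearly in $\max(a_n,b_n)$ while the right-hand side grows at most linearly; hence if $[u_n]_\w \to \infty$ along a subsequence, the inequality is violated for large $n$. A clean way to write this is to apply Young's inequality $\varepsilon_n a_n \leq \tfrac12 a_n^2 + \tfrac12\varepsilon_n^2$ and $\varepsilon_n b_n \leq \tfrac{1}{p} b_n^p + \tfrac{p-1}{p}\varepsilon_n^{p/(p-1)}$, absorb the resulting terms into the left-hand side, and conclude that both $a_n$ and $b_n$ are uniformly bounded. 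I expect this last elementary step to pose no real obstacle; the only genuinely delicate point is making sure the AR sign condition is correctly exploited, which is precisely why the hypothesis $\kappa > p$ (not merely $\kappa > 2$) is needed to keep the $p$--Laplacian coefficient $\kappa/p - 1$ strictly positive.
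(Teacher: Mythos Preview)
Your proposal is correct and follows essentially the same approach as the paper: form the combination $\kappa F(u_n)-F'(u_n)(u_n)$ (the paper writes this equivalently as $F(u_n)\geq\ldots+\tfrac{1}{\kappa}F'(u_n)(u_n)$), use the AR condition and $\kappa>p>2$ to get a positive lower bound on the gradient terms, and conclude boundedness from the superlinear-vs-linear growth comparison. The paper omits your explicit Young's-inequality wrap-up, simply asserting that the inequality ``leads to the boundedness of $\{u_n\}$ in $\w$''.
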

\begin{proof}
Let $\{u_n\}\subset \w$ be a Palais-Smale sequence. That is, $|F(u_n)|\leq C$ and $F'(u_n)\to 0$ in $\w'$. Then
\begin{align*}
C & \geq \int_{D_1} \frac{|\nabla u_n|^2}{2} + \int_{D_2} \frac{|\nabla u_n|^p}{p} - \lambda \int_{\Omega} H(x,u_n) \, dx,
\\
& \geq \int_{D_1} \frac{|\nabla u_n|^2}{2} + \int_{D_2} \frac{|\nabla u_n|^p}{p} -\frac{\lambda}{\kappa}\int_{\Omega}u_n h(x,u_n)dx
\\
&=\left(\frac{1}{2}-\frac{1}{\kappa} \right) \int_{D_1} |\nabla u_n|^2 + \left(\frac{1}{p}-\frac{1}{\kappa} \right) \int_{D_2} |\nabla u_n|^p+\frac{1}{\kappa} F'(u_n)(u_n)
\\
& \geq \left(\frac{1}{p}-\frac{1}{\kappa} \right)\left(\int_{D_1} |\nabla u_n|^2+\int_{D_2} |\nabla u_n|^p   \right)-\frac{\varepsilon_n}{\kappa}[u_n]_{\w},
\end{align*}
where $\varepsilon_n \to 0$. This leads to the boundedness of $\{u_n\}$ in $\w$.
\end{proof}
We remark that the condition \eqref{condition-AR} can be relaxed imposing the inequality for $|s|\geq R>0$.

\section*{Ackonwledgement}
This work was started during a research stay of the first author at Universidad of Buenos Aires (Argentina) supported by Secretar\'ia de Estado de Investigaci\'on, Desarrollo e Innovaci\'on EEBB2014 (Spain) also he is partially supported by MINECO-FEDER
Grant MTM2015-68210-P (Spain), Junta de Andaluc\'ia FQM-116 (Spain) and by MINECO Grant BES-2013-066595 (Spain). The second author is supported by CONICET (Argentina) and by MINECO-FEDER Grant MTM2015-70227-P (Spain).

\end{document}